 \newfont{\smc}{cmcsc10 at 10pt}
\newfont{\ind}{cmcsc10}
\newfont{\testo}{cmr10 at 10pt}
\newfont{\mail}{cmtt10}
\newfont{\tit}{cmbx10 at 18pt}
\newtheorem{lemma}{Lemma}
\newtheorem{theorem}[lemma]{Theorem}
\newtheorem{definition}[lemma]{Definition} 
\newtheorem{remark}[lemma]{Remark} 
\newtheorem{example}[lemma]{Example} 
\newcommand{\norm}[1]{\left\lVert #1 \right\rVert}
\newcommand{\abs}[1]{\left\lvert #1 \right\rvert}
\newcommand{\pr}[1]{\left(#1 \right)}
\newcommand{\sour}{\mathrm {source}}
\newcommand{\ac}{\mathrm {a.c.}}
\newcommand{\jump}{\mathrm {jump}}
\newcommand{\cont}{\mathrm {cont}}
 \newcommand{\Cantor}{\mathrm {Cantor}}
\newcommand{\dssb}{\overline{\delta}}
\newcommand{\bel}[1]{\begin{equation}\label{#1}}
\def\bas#1\eas
\newcommand{\eal}{\end{align} }
\def\ba#1\ea
\newcounter{stepnb}
\newcommand{\vSC}[2]{{ P_{#1,#2}}}
\newcommand{\ww}{{w}}
\newcommand{\uu}{{ u}}
\newcommand{\curva}{{y}}
\newcommand{\wm}{{{\mathfrak\upsilon}}}
\newcommand{\be}{\begin{equation}}
\newcommand{\ee}{\end{equation}}
\newcommand{\R}{\mathbb{R}}
\newcommand{\nat}{\mathbb{N}}
\newcommand{\N}{\mathbb{N}}
\newcommand{\Ll}{\mathcal{L}}
\newcommand{\Qg}{\mathcal{Q}}
\newcommand{\Z}{\mathbb{Z}}
\newcommand{\J}{\mathcal{J}}
\newcommand{\eps}{\varepsilon}
\newcommand{\TV}{\hbox{\testo Tot.Var.}}
\newcommand{\SBV}{\mathrm{SBV}}
\newcommand{\BV}{\mathrm{BV}}
\newcommand{\loc}{\mathrm{loc}}
\newcommand{\ddt}{\frac d{dt}}
\DeclareMathOperator{\pt}{\partial_{t}}
\DeclareMathOperator{\px}{\partial_{x}}
\numberwithin{equation}{section}
\DeclareMathOperator{\Graph}{Graph}
\begin{document}
%
%
\title{\uppercase{SBV-like regularity of Entropy Solutions for a Scalar Balance Law}}

\author{
 {\scshape Fabio Ancona} \email{ancona@math.unipd.it}}
 
\author{{\scshape Laura Caravenna} \email{laura.caravenna@unipd.it}}

\author{{\scshape Andrea Marson} \email{marson@math.unipd.it} } 
 
 \address{Dipartimento di Matematica `Tullio Levi-Civita'
 Via Trieste, 63,
 35121 - Padova, Italy} 
\date{\today}

\author{\vskip\baselineskip\emph{Dedicated to Professor Gui-Qiang Chen on the occasion of his 60th birthday}}

\maketitle

\begin{abstract}
In this note we discuss the $\SBV$-regularity for a scalar balance law in one space dimension as a case study in order to explain the strategy that we apply in~\cite{ACM2} to systems of balance laws, generalizing~\cite{BCSBV},\cite{BYu}.
While for a single balance law the more general work~\cite{Robyr} is already available, generalizing the breakthrough~\cite{AmbrosioDeLellisNotes} related to a conservation law, the case of $1D$ systems presents new behaviors that require a different strategy.
This is why in this note we make the effort to introduce the notation and tools that are required for the case of more equations. 
When the flux presents linear degeneracies, it is know that entropy solutions can present nasty fractal Cantor-like behaviors, although $f'(u)$ is still $SBV$: we thus discuss $\SBV$-like regularity generalizing~\cite{BYTrieste} as $\SBV$-regularity fails.

\end{abstract}

\vspace{0.5cm}

2010\textit{\ Mathematical Subject Classification:} 35L45, 35L65

\keywords{hyperbolic systems; vanishing viscosity solutions; SBV regularity; balance laws}
 %
%
%
%
%
%

\tableofcontents

\markboth{F. Ancona, L. Caravenna, A. Marson}{$\SBV$ regularity of entropy solutions for hyperbolic balance laws}

\section[Introduction]{Introduction}
\label{sec:int}

In this note we discuss the $\SBV$-regularity for a scalar balance law in one space dimension as a case study in order to explain the strategy that we apply in~\cite{ACM2} to systems of balance laws, generalizing~\cite{BCSBV},\cite{BYTrieste},\cite{BYu}.
We stress that for a single balance law the more general work~\cite{Robyr} is already available, generalizing the breakthrough~\cite{AmbrosioDeLellisNotes} related to a conservation law, nevertheless the case of $1D$ systems presents new behaviors that require a different strategy.
This is why in this note we make the effort to use the notation and tools that are required for the case of more equations, although one equation is simpler.

While in the note~\cite{notaCaravenna} the heuristics of the new approach was presented for Burgers' equation, with no source, in the present note we will also provide an insight of the tools introduced towards the regularity estimate.

Consider the Cauchy problem for a single balance law in one space dimension
\begin{equation}\label{eq:CP1d}
\pt u(t,x)+\px f(u(t,x))=g(t,x,u)\qquad \text{ together with}\qquad \overline u=u(0,\cdot)\in \BV(\R)\,.
\end{equation}
We assume that the source term satisfies the following assumption
\begin{description}
\item[(G)]\label{Ass:G}
the function $g:\R^{3}\rightarrow \R$ is continuous in $t$ and Lipschitz continuous w.r.t. $x$ and $u$, uniformly in $t$; moreover, suppose there exists
a function $\alpha\in L^1(\R)$ such that $\vert g_x(t,x,u)\vert \leq
\alpha(x)$\label{alpha} for any $t,u$\end{description}

\begin{definition}[Cantor part of the derivative of a $\BV$-function of one variable]
\label{D:cantorPart1dn}
Suppose $v:(a,b)\to\R$ has bounded variation, thus $D_{x}v$ {can be identified with} a measure.
We call Cantor part of $D_{x}v$, and we denote it by $D^{\Cantor}_{x}v$, the continuous part of the measure $D_{x}v$ which is not absolutely continuous in the Lebesgue $1$-dimensional measure: we write
\[
D_{x}v =D^{\ac}_{x}v +D^{\Cantor}_{x}v +D^{\jump}_{x}v 
\]
where $D^{\ac}_{x}v $ is absolutely continuous and $D^{\jump}_{x}v $ is purely atomic.
If $D^{\Cantor}_{x}v =0$ we say that $v$ is a special function of bounded variation and we denote $v\in\SBV((a,b))$.
\end{definition}

As a convention all through the note, when we restrict a function of bounded variation $u$ to some time $t$ we think that $u(t)$ is the representative continuous from the right, in time.
As well, $D_{x}u(t)$ will be $w^{*}$-continuous from the right.

\begin{theorem}\label{T:main}
Let $\mathfrak c>0$.
Consider the entropy solution $u:\R^{+}\times\R\to\R$ to the Cauchy problem for the balance law~\eqref{eq:CP1d}.
Suppose that $f\in W^{2,\infty}_{\loc}(\R)$ satisfies $f'(z+h)-f'(z)\geq \mathfrak c h$ for $h>0$, and that $g$ satisfies Assumption \textbf{(G)} at Page~\pageref{Ass:G}.

Then $x\mapsto u(t,x)$ is a special function of bounded variation for $t\notin S$, with $S$ at most countable.
\end{theorem}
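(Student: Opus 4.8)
The plan is to reduce the statement to a one-sided (Oleinik-type) estimate for the characteristic speed and then to exhibit a functional of time that is monotone up to an absolutely continuous correction and whose jumps detect the Cantor part.

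\emph{Reduction to the characteristic speed.} First I would set $v(t,\cdot):=f'(u(t,\cdot))$. Since $f\in W^{2,\infty}_{\loc}(\R)$ and $f''\geq\mathfrak c>0$, the map $z\mapsto f'(z)$ is locally bi-Lipschitz and strictly increasing; by the Vol'pert chain rule $D_{x}v(t)=f''(u(t,\cdot))\,D_{x}u(t)$ as measures, so in particular $D^{\Cantor}_{x}v(t)=f''(\widetilde u(t,\cdot))\,D^{\Cantor}_{x}u(t)$ with $f''\geq\mathfrak c>0$ bounded below. Hence $u(t,\cdot)\in\SBV$ if and only if $v(t,\cdot)\in\SBV$, and it suffices to produce an at most countable set $S$ with $D^{\Cantor}_{x}v(t)=0$ for $t\notin S$.

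\emph{One-sided bound.} Next, using Dafermos' theory of generalized characteristics adapted to the balance law — along a characteristic $\dot x=v$ one has $\tfrac{d}{dt}u=g$, hence $\tfrac{d}{dt}v=f''(u)\,g$ — I would derive an Oleinik-type estimate $D_{x}v(t)\leq\psi(t)\,\Ll^{1}$ for a locally bounded $\psi$, where for $g\equiv 0$ one recovers $\psi(t)=1/t$ and the source produces a correction controlled through the Lipschitz bound on $g$ in $u$ and through $\alpha\in L^{1}$ for $g_{x}$. The consequence is that the positive part $(D_{x}v(t))^{+}$ is absolutely continuous, so $D^{\Cantor}_{x}v(t)$ is a non-positive measure and the whole singular part of $D_{x}v(t)$ is negative, split into shocks $D^{\jump}_{x}v(t)\leq 0$ and the Cantor part.

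\emph{Monotone functional and detection of the Cantor part.} The heart of the argument is to introduce the positive variation $\Phi_{0}(t):=(D_{x}v(t))^{+}(\R)$, finite by the previous step, and to show that the total variation — and hence $\Phi_0$ — is non-increasing up to a locally absolutely continuous-in-$t$ correction coming from the source. This correction is controlled because, by assumption \textbf{(G)}, the rate at which $g$ creates spatial variation is integrable ($\int_{\R}\alpha(x)\,dx<\infty$), the amplification by $g_{u}$ being absorbed by a Grönwall estimate. Writing $\Phi(t):=\Phi_{0}(t)-R(t)$ with $R$ locally absolutely continuous and non-decreasing, the functional $\Phi$ is non-increasing and of bounded variation in $t$, hence continuous off an at most countable set $S$. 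It then remains to prove the detection lemma: if $D^{\Cantor}_{x}v(t_{0})\neq 0$ then $\Phi$ has a genuine downward jump at $t_{0}$. The mechanism is that a nonzero negative singular-continuous part acts as a dense superposition of infinitesimal compressions that instantaneously nucleate shocks, destroying a positive amount of positive variation across $t_{0}$; quantified via backward characteristics and the Lagrangian bookkeeping, this yields $\Phi(t_{0}^{+})<\Phi(t_{0}^{-})$ with a gap comparable to $\lvert D^{\Cantor}_{x}v(t_{0})\rvert(\R)$, forcing $t_{0}\in S$.

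\emph{Main obstacle.} I expect the detection lemma, together with the clean separation of the source's effect, to be the crux. The delicate point is to show rigorously that a Cantor part forces a \emph{jump} — not merely a continuous decrease — of the functional, which requires a careful characteristic/Lagrangian representation of how continuous waves are absorbed into shocks, and simultaneously to verify that $g$ perturbs both the Oleinik estimate and the functional only by absolutely continuous-in-time quantities, so that the source neither manufactures spurious Cantor parts nor spurious jumps. This is precisely the bookkeeping that the scalar case is meant to preview for the system treated in~\cite{ACM2}.
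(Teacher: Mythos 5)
Your overall architecture --- reduce to $v=f'(u)$, prove a one-sided Ol\"einik bound so that the singular part of $D_{x}v(t)$ is non-positive, then exhibit a monotone-in-time functional that must jump whenever a Cantor part is present --- is the Ambrosio--De Lellis/Robyr route, not the one taken in this paper: here the argument runs through a \emph{two-sided} Ol\"einik-type estimate~\eqref{E:iOleinink1dn} for $[D_{x}^{\cont}u(t)]^{\pm}$ evaluated on Borel sets, obtained from wave-balance measures along the operator-splitting/front-tracking approximation, and countability of $S$ is read off directly from the atoms of finite measures projected onto the time axis, with no detection lemma needed. That difference of route would be legitimate in itself; the problem is that the detection lemma, which you correctly identify as the crux, is false for the functional you propose.

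Concretely, $\Phi_{0}(t):=(D_{x}v(t))^{+}(\R)$ does not see a negative Cantor part. Take $g\equiv0$ and a profile $w$ which is monotone non-increasing and carries a Cantor-type derivative (minus a Cantor staircase, say, made constant outside a bounded interval); since $D_{x}f'(w)\leq 0\leq \Ll^{1}/t_{0}$, such a profile is attained by an entropy solution at a positive time $t_{0}$, and the references \cite{Cheng,MarconiReg} contain explicit constructions of solutions with Cantor parts at positive times. By order preservation $u(t,\cdot)$ stays non-increasing for all $t$, so $(D_{x}v(t))^{+}(\R)\equiv0$, $R\equiv0$, and $\Phi\equiv0$ has no downward jump at $t_{0}$, while $D^{\Cantor}_{x}v(t_{0})\neq0$. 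The mechanism you invoke --- compressions nucleating shocks and thereby destroying positive variation --- only consumes positive variation when there is positive variation nearby to absorb; on a monotone decreasing profile the collapse of a Cantor part into shocks changes neither $\TV$ nor the positive variation. The quantity that genuinely jumps in the Ambrosio--De Lellis argument is Lagrangian rather than Eulerian: the Lebesgue measure of the set of initial points still reached by minimal backward characteristics at time $t$, which drops by an amount comparable to $|D^{\Cantor}_{x}v(t_{0})|(\R)$ because the backward characteristics emanating from the $\Ll^{1}$-null set carrying the Cantor part cover a set of positive measure at the initial time and cease to be admissible immediately after $t_{0}$. Repairing your proof means replacing $\Phi_{0}$ by that functional and redoing your Gr\"onwall bookkeeping for the source along generalized characteristics of the balance law --- which is essentially Robyr's proof --- or else switching to the two-sided balance-measure estimates used here.
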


\begin{remark} 
Theorem~\ref{T:main} holds also assuming $\overline u\in L^{\infty}(\R)$ instead of $\overline u\in\BV(\R)$ since, whenever $\overline u\in L^{\infty}(\R)$, under assumption \textbf{(G)} due to Ol\"einik-type estimates $u(t)$ belongs in $\BV(\R)$ for $t>0$, see~\cite[Theorem~2]{Robyr}.
While the lower bound $\mathfrak c$ is essential in Ol\"einink estimate even for conservation laws, the assumption $f\in W^{2,\infty}(\R)$ rather than $f\in W^{1,\infty}(\R)$ is needed to deal with Ol\"einink-type estimate because of the source term.
\end{remark}

\begin{theorem}\label{T:corolloary}
Let $f\in C^{2}(\R)$.
Consider the entropy solution $u:\R^{+}\times\R\to\R$ to the Cauchy problem for the balance law~\eqref{eq:CP1d}.
Suppose $g$ satisfies assumption \textbf{(G)} at Page~\pageref{Ass:G}. Then there is $S\subset\R^{+}$ at most countable such that:
\begin{enumerate}
\item\label{item:1SBVlike}
$x\mapsto f'(u(t,x))$ is a special function of bounded variation for $t\notin S$.
\item\label{item:2SBVlike} 
If in addition $\Ll^{1}(\{f''=0\})=0$, then $x\mapsto u(t,x)$ is itself a special function of bounded variation for $t\notin S$.
\end{enumerate}
\end{theorem}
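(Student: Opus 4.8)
The plan is to reduce Theorem~\ref{T:corolloary} to the uniformly convex case already settled in Theorem~\ref{T:main} by passing to the characteristic speed $w:=f'(u)$. Since $u(t,\cdot)\in\BV(\R)$ is bounded and $f'$ is locally Lipschitz, $w(t,\cdot)\in\BV(\R)$ for every $t>0$ and $\bar w:=f'(\bar u)\in\BV(\R)$; differentiating the balance law along $f'$ (formally, multiplying $\pt u+\px f(u)=g$ by $f''(u)$) gives
\[
\pt w+\px\!\Big(\tfrac{w^{2}}{2}\Big)=f''(u)\,g(t,x,u).
\]
Thus $w$ is governed by a balance law whose flux $\tilde f(w)=w^{2}/2$ is uniformly convex, $\tilde f'(z+h)-\tilde f'(z)=h\ge 1\cdot h$ for $h>0$, so the structural hypothesis of Theorem~\ref{T:main} holds with $\mathfrak c=1$ and $\tilde f\in W^{2,\infty}_{\loc}(\R)$.

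First I would make this reduction rigorous, i.e. identify $w$ with the entropy solution of the Burgers-type balance law above. The delicate point (see below) is that across a shock of $u$ the Rankine--Hugoniot speed for $\tilde f$ need not match the speed $\sigma$ of the $u$-shock, so $w$ solves the equation only up to a nonpositive defect measure on the jump set; the Oleinik/Lax admissibility of the $u$-shocks gives $f'(u^{-})\ge\sigma\ge f'(u^{+})$, so the jumps of $w$ point downwards, which is exactly the sign needed for the entropy bookkeeping behind Theorem~\ref{T:main}. Once $w$ is placed in this framework, Theorem~\ref{T:main} (applied with unknown $w$, flux $\tilde f$ and source $f''(u)\,g$) yields an at most countable set $S$ such that $x\mapsto w(t,x)=f'(u(t,x))\in\SBV(\R)$ for $t\notin S$. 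This is assertion~\eqref{item:1SBVlike}.

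For assertion~\eqref{item:2SBVlike} I would argue pointwise in $t\notin S$ by the Vol'pert chain rule for the Lipschitz composition $w=f'(u)$: on the Cantor part of the derivative one has
\[
D^{\Cantor}_{x}w=f''(\tilde u)\,D^{\Cantor}_{x}u,
\]
where $\tilde u$ denotes the precise representative. Since $w(t,\cdot)\in\SBV(\R)$ we have $D^{\Cantor}_{x}w=0$, hence $D^{\Cantor}_{x}u$ is carried by $\{\,\tilde u\in\{f''=0\}\,\}=\tilde u^{-1}(\{f''=0\})$. Invoking the standard $\BV$ fact that the diffuse part of $D_{x}u$ does not charge $\tilde u$-counterimages of $\Ll^{1}$-negligible sets, together with the hypothesis $\Ll^{1}(\{f''=0\})=0$, we conclude $|D^{\Cantor}_{x}u|\big(\tilde u^{-1}(\{f''=0\})\big)=0$, so $D^{\Cantor}_{x}u=0$ and $x\mapsto u(t,x)\in\SBV(\R)$ for the same exceptional set $S$.

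The main obstacle is the first step: for a general $f\in C^{2}$ the map $f'$ is not monotone, so $u$ cannot be recovered from $w$ and the source $f''(u)\,g$ cannot be rewritten as a function $\tilde g(t,x,w)$ satisfying Assumption~\textbf{(G)} — indeed $x\mapsto f''(u(t,x))\,g(t,x,u(t,x))$ is only $\BV$, not Lipschitz, and carries jumps at the shocks of $u$. Hence Theorem~\ref{T:main} cannot be quoted verbatim, and one must verify that its engine (the one-sided Oleinik bound $\px w(t,\cdot)\le \tfrac1t\,\Ll^{1}+\dots$ and the monotone-in-$t$ control of the Cantor-plus-downward-jump content) survives when the source is merely $\BV$ in $x$ and a nonpositive defect measure sits on the jump set. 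Making this robustness precise — that is, checking that the exceptional times remain countable under a $\BV$ source and an entropy-dissipative defect — is where the substance of the argument lies; the chain-rule upgrade in the third paragraph is then routine.
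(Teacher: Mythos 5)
Your reduction of point~\eqref{item:1SBVlike} to Theorem~\ref{T:main} via $w=f'(u)$ and the Burgers flux $w^{2}/2$ has a gap that you yourself flag but do not close, and it is not a technicality: it is the whole difficulty. Two things go wrong. First, the source $f''(u)\,g(t,x,u)$ cannot be rewritten as $\tilde g(t,x,w)$ (since $f'$ is not invertible), and as a function of $x$ it is only $\BV$, with jumps sitting on the shock set; Assumption \textbf{(G)} --- Lipschitz continuity in $x$ with $\abs{g_{x}}\leq\alpha\in L^{1}$ --- is used throughout \S~\ref{S:measures} to bound $\mu^{\sour}_{\nu}$ and hence $\abs{\mu^{\nu}}$, and with a source charging the shock curves the right-hand side of~\eqref{E:iOleinink-1dn} need no longer vanish as $s\to t$ outside a countable set, which is precisely the mechanism that produces the countable exceptional set. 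Second, the sign claim on the defect measure is unjustified: along a shock of speed $\sigma$ the distributional excess of $\pt w+\px(w^{2}/2)$ has density proportional to $(w^{+}-w^{-})\bigl(\tfrac{w^{+}+w^{-}}{2}-\sigma\bigr)$, and for a non-convex $f$ the Oleinik admissibility only gives $w^{+}\leq\sigma\leq w^{-}$, so the second factor has no sign; hence $w$ is not an entropy-dissipative solution of the Burgers balance law in any sense that would let you quote, or even routinely adapt, Theorem~\ref{T:main}.

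The paper avoids this global reduction altogether. It decomposes $[0,T]\times\R$ into the jump set $J$, the degenerate set $F=\{(t,x)\notin J\,:\,f''(u(t,x))=0\}$ and its complement $P$; on $F$ the measure $D_{x}f'(u)=f''(u)D_{x}u$ vanishes by Vol'pert's chain rule, while $P$ is covered (using the tame oscillation condition) by countably many triangles $T_{k}$ on which $f''(u)$ is bounded away from zero with a fixed sign, so that Theorem~\ref{T:main} applies \emph{locally} on each $T_{k}$, where $f$ is uniformly convex or concave on the relevant range of $u$; the union of the countable sets $S_{k}$ gives $S$. Your third paragraph, for point~\eqref{item:2SBVlike}, is essentially the paper's argument: the paper proves by hand, via exterior approximation of a compact $K\subseteq F$ by finitely many intervals and lower semicontinuity of the total variation, exactly the fact you invoke, namely that the diffuse part of $D_{x}u(t)$ does not charge $u(t,\cdot)^{-1}(\{f''=0\})$ when $\Ll^{1}(\{f''=0\})=0$. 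So the final step of your argument is sound, but the reduction on which point~\eqref{item:1SBVlike} rests would have to be replaced by the localization argument.
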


\begin{remark} 
Since we are discussing local properties, as well $\BV_{\loc}(\R)$ is a good assumption in Theorem~\ref{T:corolloary} because of the finite speed of propagation.
In the case of a conservation law, when $\{f''=0\}$ is finite and if at each $u$ where $f''(u)=0$ there is a non-vanishing higher order derivative, then initial data $\overline u\in L^{\infty}(\R)$ are regularized to $f'(u(t))\in \BV_{\loc}(\R)$ at any time $t>0$; this can then be improved to $f'(u(t))\in\SBV_{\loc}(\R)$ with the exception of countably many times at most, see~\cite{Cheng,MarconiReg} for a wider discussion and for counterexamples.
\end{remark}

Such statements are the counterpart for the case of a single equation of what happens in the case of systems, that in this note we just state because of the higher complexity.
We consider for simplicity the system of balance laws
\begin{subequations}\label{sys:BLglobale}
\begin{align}\label{sys:BL}
& u_t+A(u) u_x =g(u)
&&A\in C^{1}(\Omega;\R^{N})\,,\text{ $\Omega\subseteq\R^{N}$ open,}
&&g\in W^{1,\infty}_{\loc}(\Omega;\R^{N})\,.
\end{align}
We assume that the
system is strictly hyperbolic, i.e. that the
matrix $A(u)$ has $N$ real distinct eigenvalues
\begin{equation} 
\lambda_1(u)<\dots<\lambda_N(u)\qquad\forall~u\in\Omega\,,
\label{eq:strhyp1dnota}
\end{equation}
\end{subequations}
and we will denote by
\begin{equation}
\label{eq:rle}
r_1(u),\ldots, r_N(u)\,, \qquad
\ell_1(u),\ldots,\ell_N(u)
\end{equation}
corresponding bases of, respectively, right and left
eigenvectors, normalized so that $\vert r_k(u)\vert\equiv 1$ and $ {r_k(u)}\cdot {\ell_h(u)} = \delta_{kh}$, where $\delta_{hk}$ is the usual Kronecker symbol.

When $v\in\BV_{\loc}([0,T]\times\R;\Omega)$, with $\Omega\subseteq\R^{N}$ open, we denote $D^{\Cantor}_{x}v{(t)}$ the vector measure whose $i$-th component is the Cantor part $D_x v_i(t{,\cdot})$ of {the $i$-th component $v_i(t{,\cdot})$} of $v$. {{Since $D^{\Cantor}_{x}v(t)$ is not absolutely continuous in the Lebesgue measure,  it is concentrated on some $\sigma$-compact set $K\subseteq\R$ satisfying $\Ll^1(K)=0$.
Since nevertheless it is a continuous measure, one can assume that $v(t,\cdot)$ is continuous at $x\in K$ up to removing the points of jump.}

\begin{theorem}[$\SBV$ regularity {for a single field}]
\label{T:SBVGNbalance1den}
Let $u\in\BV_{}([0,T]\times\R;\Omega)$ be an entropy solution of the strictly hyperbolic system of balance laws~\eqref{sys:BLglobale} with small $\BV$ norm. 
With notations just introduced, suppose that $\nabla \lambda_{i}\cdot r_{i}>0$ in $\Omega$, which means that the $i$-th field is genuinely nonlinear{, for some $i\in\{1,\dots,N\}$}.
Then there exists an at most countable set $S\subset[0,T]$ of times such that the measure $D^{\Cantor}_{x}u(t,\cdot)\cdot \ell_i(u(t,\cdot)$ vanishes for every $t\in[0,T]\setminus S$.
\end{theorem}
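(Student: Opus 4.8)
The plan is to reduce the statement to a Burgers-like, one-field analysis and then to confine the exceptional times to the jump set of a scalar $\BV$ function of $t$. Throughout I fix the genuinely nonlinear index $i$ and work with the $i$-th wave measure $\mu_t := D_x u(t,\cdot)\cdot\ell_i(u(t,\cdot))$, whose Cantor part is precisely the measure $D^{\Cantor}_x u(t,\cdot)\cdot\ell_i(u(t,\cdot))$ that we must show vanishes for all but countably many $t$. The first step is to record the effect of genuine nonlinearity on admissible solutions: exactly as $f''\ge\mathfrak{c}>0$ yields the Ol\"einik bound in the scalar case, the condition $\nabla\lambda_i\cdot r_i>0$ produces a one-sided decay estimate for the expansive (positive) part of the $i$-th waves. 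Up to corrections coming from the interactions with the other $N-1$ families and from the source, this gives $\mu_t^+\le w(t)\,\Ll^1$ with $w$ locally bounded for $t>0$, so that the positive part of $\mu_t$ is absolutely continuous. Consequently the Cantor part of $\mu_t$ is carried entirely by the compressive (negative) part, and it suffices to control the focusing of the $i$-th waves.

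The heart of the argument is a monotone functional. I would introduce a nonnegative functional $F(t)$ measuring the total compression stored in the $i$-th family — concretely a weighted negative variation of $\mu_t$ quantifying the defect from the Ol\"einik cap — and then show, using the wave-interaction estimates valid for solutions of small $\BV$ norm together with a Duhamel-type control of the source, that $t\mapsto F(t)$ has bounded variation on $[0,T]$. The construction rests on the Lagrangian/characteristic parametrization of the $i$-th waves, which plays here the role that the Lax--Ol\"einik formula plays for a single conservation law, but which remains available for systems; this is exactly the tool whose notation the present note is set up to introduce.

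I would then prove the key dissipation inequality: if $D^{\Cantor}_x u(t_0,\cdot)\cdot\ell_i\ne 0$, the compression encoded by this Cantor measure is instantaneously converted into shocks for $t>t_0$, producing a downward jump of $F$ at $t_0$ bounded below by the total mass of the Cantor part. Since $F$ is $\BV$ in time, it admits at most countably many jumps; collecting these times into $S$ gives $D^{\Cantor}_x u(t,\cdot)\cdot\ell_i=0$ for every $t\in[0,T]\setminus S$, which is the assertion.

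The main obstacle, and the reason the systems case genuinely differs from the scalar one, is the absence of an explicit representation formula: the monotonicity of $F$ and the dissipation inequality must be extracted directly from the wave structure, controlling at once the transversal interactions between the $i$-th field and the others and the continuous creation and cancellation of waves along characteristics. The source $g(u)$, being $W^{1,\infty}_{\loc}$, contributes only an absolutely continuous forcing to the wave balance — mirroring Assumption \textbf{(G)} in the scalar statements — and is absorbed into $w(t)$ and into the variation of $F$ without touching the Cantor part; nevertheless, keeping the interaction and source error terms summable, so that $F$ stays $\BV$ in time, is the delicate quantitative point.
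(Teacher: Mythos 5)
Your plan is recognizably the Ambrosio--De Lellis / Bianchini--Caravenna route: a one-sided Ol\"einik decay disposes of the positive part, and a monotone, BV-in-time functional whose downward jumps dominate the mass of the negative Cantor part confines the bad times to a countable set. That is a genuinely different route from the one this note is built to advertise. The note does not prove Theorem~\ref{T:SBVGNbalance1den} here (the proof is deferred to~\cite{ACM2}); what it does is sketch, on the scalar model, the strategy used there, and that strategy is \emph{two-sided}: split $D_xu_\nu$ into an approximate jump part and an approximate continuous part (Definitions~\ref{D:jumpApprox}--\ref{D:approxJump}), introduce the wave-balance measures $\mu^{\nu,\jump}$, $\mu^{\nu,\cont}$ and the source measure $\mu^{\sour}_\nu$ with uniform bounds (Lemma~\ref{L:estSource1d}), and run balances on regions bounded by generalized characteristics so as to obtain a forward Ol\"einik bound on $[D_x^{\cont}u(t)]^-$ \emph{and} a backward one on $[D_x^{\cont}u(t)]^+$, each with error terms that form a finite measure in $t$; evaluating on $\Ll^1$-null sets and letting $s\to t$ then bounds the whole continuous derivative on null sets by the atoms of that time-measure. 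What the two-sided balance buys is that one never needs the dissipation lemma (``Cantor compression turns instantly into shocks''), which is exactly the step the authors found does not transfer well from~\cite{BCSBV} to balance laws; what your route would buy, if completed, is a more geometric picture tied to focusing of characteristics.

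As a proof, however, your proposal leaves the two decisive steps unestablished, and one of them is stated too strongly. First, for a system the genuine nonlinearity of the $i$-th field does \emph{not} yield $\mu_t^+\le w(t)\,\Ll^1$ with $w$ locally bounded: the decay estimate for positive $i$-waves carries an additive error of the order of the decrease of the Glimm interaction potential (and, here, of the source measure), so on a null set $[\mu_t]^+$ is bounded by the jump at $t$ of a BV function of time, not by zero. The positive part is therefore not absolutely continuous for every $t$; the weaker statement still suffices after enlarging $S$, but the argument as written is incorrect. Second, the dissipation inequality --- that a nonzero negative Cantor part at $t_0$ forces a downward jump of $F$ at least equal to its mass --- is the entire content of the theorem on this route, and you give neither a construction of $F$ nor an argument for the inequality; for balance laws the source continuously creates and destroys $i$-waves and perturbs the characteristic speeds, so the finite-time focusing argument that works for conservation laws needs a quantitative version that survives this perturbation, which is precisely the point you defer. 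As it stands the proposal is a roadmap for a \cite{BCSBV}-style proof rather than a proof, and it is not the proof this paper is describing.
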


This is the key step in order to get the $\SBV$ regularity of the entropy solutions when all fields are genuinely nonlinear~\cite{Lax}, and in turn when they are piecewise genuinely nonlinear~\cite{IguchiLeFloch}, and more generally when they satisfy the generic non-degeneracy condition introduced in~\cite{LeFlochGlasse}: $A$ is called \emph{non-degenerate} if for all $i=1,\dots,N$ there holds
\ba\label{eq:Anondeg}
&\left( \pi_{i}^{(1)}\,,\ \dots\,,\ \pi_{i}^{(N+1)}\right)\neq (0,\dots,0) 
&&\text{where } \pi_{i}^{(1)}=\nabla \lambda_{i}\cdot r_{i}\text{ and }  \pi_{i}^{(k+1)}= \nabla  \pi_{i}^{(k)}\cdot r_{i}\,, \ k=1,\dots,N\,.
\ea
When all fields are genuinely nonlinear, then $ \pi_{i}^{(1)}\neq 0$ for $i=1,\dots, N$ and this non-degeneracy condition holds.

\begin{theorem}[$\SBV$ regularity]
\label{C:SBVGNbalance1den}
Let $u\in\BV_{}([0,T]\times\R;\Omega)$ be an entropy solution of the strictly hyperbolic system of balance laws~\eqref{sys:BLglobale} with small $\BV$ norm.
If $A$ is nondegenerate in $\Omega$ as in~\eqref{eq:Anondeg} then there exists an at most countable set $S\subset[0,T]$ of times such that  $x\mapsto u(t,x)$ is a special function of bounded variation for every $t\in[0,T]\setminus S$.
\end{theorem}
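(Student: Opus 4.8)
I would prove Theorem~\ref{C:SBVGNbalance1den} by reducing the vector statement to $N$ scalar ones along the eigenframe, and then reducing each nondegenerate field to the genuinely nonlinear situation already handled by Theorem~\ref{T:SBVGNbalance1den}. For $t$ outside the jump set the trace of $u(t,\cdot)$ can be taken continuous on the ($\Ll^1$-negligible, $\sigma$-compact) set carrying $D^{\Cantor}_{x} u(t)$, as recalled just before Theorem~\ref{T:SBVGNbalance1den}; hence $r_i(u(t,\cdot))$ and $\ell_i(u(t,\cdot))$ are defined $D^{\Cantor}_{x} u(t)$-a.e. and, since $\{r_i\}$ and $\{\ell_i\}$ are dual bases, the polar vector of $D^{\Cantor}_{x} u(t)$ decomposes as
\[
D^{\Cantor}_{x} u(t) \;=\; \sum_{i=1}^{N} \big(D^{\Cantor}_{x} u(t)\cdot \ell_i(u(t,\cdot))\big)\, r_i(u(t,\cdot)).
\]
Thus it suffices to produce, for each $i$, an at most countable $S_i$ such that the scalar Cantor measure $D^{\Cantor}_{x} u(t)\cdot\ell_i(u(t,\cdot))$ vanishes for $t\notin S_i$; then $S=\bigcup_i S_i$ is at most countable and $D^{\Cantor}_{x} u(t)=0$ for $t\notin S$, i.e. $x\mapsto u(t,x)$ is a special function of bounded variation.

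For a field that is genuinely nonlinear on all of $\Omega$ this scalar statement is exactly Theorem~\ref{T:SBVGNbalance1den}, so the whole content is to obtain the same conclusion for a merely nondegenerate field. Here I would use~\eqref{eq:Anondeg} to localize. Along each integral curve of $r_i$ one has $\tfrac{d}{ds}\pi_i^{(k)}=\pi_i^{(k+1)}$, so nondegeneracy says that $\pi_i^{(1)}=\nabla\lambda_i\cdot r_i$ and its first $N$ derivatives along the curve never vanish simultaneously; hence $\pi_i^{(1)}$ has finite order of vanishing at each point and therefore isolated zeros. Consequently the inflection locus $\{\pi_i^{(1)}=0\}$ is $\Ll^N$-negligible and $\Omega$ splits, away from it, into finitely many regions on each of which the $i$-th field is genuinely nonlinear with a fixed sign of $\pi_i^{(1)}$. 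This is the system counterpart of the hypothesis $\Ll^1(\{f''=0\})=0$ in Theorem~\ref{T:corolloary}: nondegeneracy supplies that negligibility for free, which is why no extra assumption appears in Theorem~\ref{C:SBVGNbalance1den}.

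On each genuinely nonlinear region I would then run the mechanism behind Theorem~\ref{T:SBVGNbalance1den}: the one-sided Lax--Oleinik bound on $\lambda_i$ along the $i$-characteristics provides a monotone-in-time spreading functional (finite for $\BV$-small solutions by the Glimm-type interaction estimates) whose jumps in $t$ account for the Cantor mass of $D^{\Cantor}_{x} u(t)\cdot\ell_i$, so that this mass can be nonzero at most at countably many times. Since for $\Ll^1$-a.e. $x$ the trace of $u(t)$ avoids the negligible inflection locus, the Cantor part is concentrated on the genuinely nonlinear regions, and the finitely many local contributions assemble into one countable set $S_i$; the source $g(u)$, being Lipschitz, enters the relevant balances only through an absolutely continuous correction and so does not affect the Cantor part.

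The main obstacle is the behavior across the inflection surfaces and the coupling with the other characteristic families. Unlike the scalar case, the component $D^{\Cantor}_{x} u(t)\cdot\ell_i$ is not transported by a closed scalar law: the eigenframe $r_i(u),\ell_i(u)$ is $u$-dependent and the $i$-waves interact with the $j$-waves through interaction and cancellation measures, so one must show that these cross-field effects, together with the possible concentration of $i$-waves near $\{\pi_i^{(1)}=0\}$, keep the monotone functional well defined and confine its atoms to countably many times. Controlling this is precisely where the systems argument departs from the scalar treatment of~\cite{Robyr} and requires the full apparatus developed for Theorem~\ref{T:SBVGNbalance1den}.
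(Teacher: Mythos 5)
Your overall architecture (decompose $D^{\Cantor}_{x}u(t)$ in the eigenframe, treat each field separately, and invoke Theorem~\ref{T:SBVGNbalance1den} where the field is genuinely nonlinear) is consistent with the route the paper indicates; note that the paper only states this theorem and defers the proof to~\cite{ACM2}, so the relevant benchmark is the scalar model, namely the proof of Theorem~\ref{T:corolloary}. Measured against that, your treatment of the degeneracy locus has a genuine gap. You argue that $\{\pi_i^{(1)}=0\}$ is $\Ll^N$-negligible in $\Omega$ and that ``for $\Ll^1$-a.e.\ $x$ the trace of $u(t)$ avoids the negligible inflection locus, [so] the Cantor part is concentrated on the genuinely nonlinear regions.'' This step fails for two reasons: the measure $D^{\Cantor}_{x}u(t)\cdot\ell_i(u(t,\cdot))$ is singular with respect to $\Ll^1$, so a property holding for $\Ll^1$-a.e.\ $x$ says nothing about where that measure is concentrated; and $\Ll^N$-negligibility of a hypersurface in state space does not prevent the curve $x\mapsto u(t,x)$ from taking values \emph{inside} that hypersurface on a set of $x$ carrying positive Cantor mass. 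The scalar proof of Theorem~\ref{T:corolloary}, Point~\eqref{item:2SBVlike}, shows what is actually required: one must prove $\abs{D_x v}(K)=0$ for compact $\Ll^1$-null sets $K$ of continuity points on which the solution takes degenerate values, and there the argument hinges on $\Ll^1(v(K))=0$, i.e.\ negligibility of the \emph{image} in the one-dimensional state space. For systems the image lies in $\R^N$ and the degenerate set is a hypersurface, so that input is simply unavailable; this is precisely why the nondegeneracy condition~\eqref{eq:Anondeg} involves all the iterated derivatives $\pi_i^{(k)}$ up to order $N+1$, and the intended mechanism (as in~\cite{LeFlochGlasse},~\cite{BYTrieste}) is an induction forcing any Cantor mass concentrated where $\pi_i^{(1)}(u)=0$ to be concentrated where $\pi_i^{(2)}(u)=0$ as well, and so on, until~\eqref{eq:Anondeg} leaves no room for it. Your proposal uses the higher $\pi_i^{(k)}$ only to deduce isolated zeros of $\pi_i^{(1)}$ along integral curves, which is too weak a consequence to close the argument.

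A secondary but real issue is that your localization is performed in state space ($\Omega$ split into regions of fixed sign of $\pi_i^{(1)}$), whereas Theorem~\ref{T:SBVGNbalance1den} needs genuine nonlinearity on the whole domain visited by the solution. The scalar proof shows the localization must be carried out in space--time: one covers $\{(t,x):f''(u(t,x))\neq 0\}$ by countably many triangles $T_k$, constructed via the tame oscillation condition, on which $f''(u(T_k))$ has a definite sign, plus an exceptional set with countable projection on the $t$-axis. You would need the analogous space--time covering adapted to $\pi_i^{(1)}(u)$; moreover ``finitely many regions'' is unjustified (zeros along an integral curve are locally finite, hence at most countable). These points are repairable, but as written the passage from the nondegeneracy condition to the vanishing of the Cantor part on the degenerate locus is missing.
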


When $A$ is possibly degenerate, then counterexamples show that $D_{x}u$ does in general have a Cantor part, as well as the characteristic velocities $\lambda_{i}(u)$, even in the case of conservation laws~\cite{BYu}. 
The best we get is the vanishing of the Cantor part of the $i$-th component of $D_{x}\lambda_{i}(u)$, defined as the measure $\left(\nabla\lambda_i\left(  u(t,x)\right)\cdot r_i(  u(t,x))\right)\left(D^{\Cantor}_{x}u\cdot  \ell_{i}(u)\right)$.

\begin{theorem}[$\SBV$-like regularity]
\label{Th:SBVlike1dnota}
Let $u\in\BV_{}([0,T]\times\R;\Omega)$ be an entropy solution of the strictly hyperbolic system of balance laws~\eqref{sys:BLglobale} with small $\BV$ norm. 
Then, there exists a $\sigma$-compact set $ K\subset \{\text{continuity points } (t,x) \text{ of } u\ : \nabla\lambda_i(u(t,x))\cdot r_i(u(t,x))=0\}$ such that the following holds:  for $t>0$, except at most countably many, the derivative $D_{x}u(t)$ on $\{x\ :\ (t,x)\not\in K\}$ is the sum of a purely atomic measure and of an absolutely continuous measure.
\end{theorem}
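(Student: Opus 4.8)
The plan is to reduce the statement to the single-field weighted estimate behind Theorem~\ref{T:SBVGNbalance1den} and then to localize it. First I would write the derivative in the eigenbasis, $D_x u(t) = \sum_{k=1}^N \big(\ell_k(u(t,\cdot))\cdot D_x u(t)\big)\, r_k(u(t,\cdot))$, and observe that multiplying a measure by the bounded $\BV$ coefficients $\ell_k(u(t,\cdot))$, $r_k(u(t,\cdot))$ (evaluated at the precise representative) preserves the decomposition of Definition~\ref{D:cantorPart1dn}: it sends absolutely continuous measures to absolutely continuous ones, atomic to atomic, and the continuous singular (Cantor) part to a Cantor part, because a bounded Borel weight can neither create atoms nor charge a Lebesgue-null set. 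Hence $D_x^{\Cantor} u(t) = \sum_k \big(\ell_k(u(t,\cdot))\cdot D_x^{\Cantor} u(t)\big) r_k(u(t,\cdot))$, and it suffices to prove that for all but countably many $t$ each scalar Cantor measure $\ell_i(u(t,\cdot))\cdot D_x^{\Cantor}u(t)$ is concentrated on the degeneracy set $\{x : \nabla\lambda_i(u(t,x))\cdot r_i(u(t,x))=0\}$; the fields with $\nabla\lambda_k\cdot r_k\neq0$ are genuinely nonlinear and contribute no Cantor part at all.

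The core step is the weighted vanishing: for each field $i$ and for all but countably many $t$,
\be
\big(\nabla\lambda_i(u(t,\cdot))\cdot r_i(u(t,\cdot))\big)\,\big(\ell_i(u(t,\cdot))\cdot D_x^{\Cantor}u(t)\big)=0 .
\ee
I would obtain this by revisiting the proof of Theorem~\ref{T:SBVGNbalance1den}, keeping the genuine-nonlinearity factor $\nabla\lambda_i\cdot r_i$ as a weight rather than assuming it positive. The engine there is an Ole\u{\i}nik/monotonicity mechanism for the $i$-th characteristic speed: genuine nonlinearity makes the positive $i$-waves decay in time, so that the positive part of the relevant measure is controlled and a suitable functional of $t$ (a Glimm-type interaction/decay functional, corrected by the $L^1_{\loc}$-in-time contribution of the source under Assumption~\textbf{(G)} and by the cross-interaction terms between fields) is monotone up to an absolutely-continuous-in-time error. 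A monotone function has at most countably many discontinuities; at every continuity time the dissipation forces the singular continuous part of the $i$-th speed to vanish, which is exactly the displayed weighted identity. To avoid assuming uniform convexity I would run this argument on the regions $\{\nabla\lambda_i\cdot r_i>\delta\}$, where the field is genuinely nonlinear with constant $\delta$, and then exhaust $\delta\downarrow0$ along a sequence, taking the countable union of the exceptional times.

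Next I would pass from the weighted identity to concentration and assemble $K$. Where $\nabla\lambda_i(u)\cdot r_i(u)\neq0$ the weight is locally bounded away from zero, so the weighted identity forces $\ell_i(u(t,\cdot))\cdot D_x^{\Cantor}u(t)=0$ on $\{x : \nabla\lambda_i(u(t,x))\cdot r_i(u(t,x))\neq0\}$; thus this Cantor measure is concentrated on the $i$-th degeneracy set, and being a continuous measure it gives no mass to the (at most countable, for fixed $t$) jump points of $u(t,\cdot)$, so it lives on continuity points. Summing over $k=1,\dots,N$ shows that, for $t$ outside the countable union of the exceptional times, $D_x^{\Cantor}u(t)$ is concentrated on the planar degeneracy-and-continuity set; taking a $\sigma$-compact inner approximation that carries all this Cantor mass produces the required $K$, contained in the set of continuity points where $\nabla\lambda_i(u)\cdot r_i(u)=0$ (for the full derivative, in the union of these sets over $i$). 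On the complementary slice $\{x : (t,x)\notin K\}$ all Cantor components vanish, so $D_x u(t)$ there is the sum of its atomic and its absolutely continuous parts.

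The main obstacle is the weighted vanishing itself: exactly on the degeneracy set the dissipation driving $\SBV$ regularity in the genuinely nonlinear case switches off, so one cannot invoke monotonicity globally but must localize to $\{\nabla\lambda_i\cdot r_i>\delta\}$ and control what crosses the moving boundary of this region, together with the source term and the inter-field interaction errors, showing that they contribute only a quantity absolutely continuous in time which does not manufacture spurious exceptional times. A secondary technical point is the measurable-in-$t$ assembly of the single planar $\sigma$-compact set $K$ whose slices $K_t$ simultaneously capture the supports of $D_x^{\Cantor}u(t)$ for almost every good time, which I would handle through the Borel structure of the precise representative of $u$ and of its continuity set, as in the slicing already set up for Definition~\ref{D:cantorPart1dn}.
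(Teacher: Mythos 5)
Your overall architecture is the right one and matches what the paper intends: reduce the statement to the vanishing, for all but countably many $t$, of the weighted measure $\left(\nabla\lambda_i(u)\cdot r_i(u)\right)\left(\ell_i(u)\cdot D^{\Cantor}_{x}u(t)\right)$ (this is exactly the object the paper singles out in the sentence preceding the theorem), then deduce that $\ell_i(u)\cdot D^{\Cantor}_x u(t)$ is concentrated where $\nabla\lambda_i(u)\cdot r_i(u)=0$, and assemble $K$ from these concentration sets. Note, however, that this note does not actually prove Theorem~\ref{Th:SBVlike1dnota}: it only states it and defers the proof to~\cite{ACM2}, presenting instead the scalar analogue (Theorem~\ref{T:corolloary}) whose proof sketch in the last section is the template you should be comparing against.

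Measured against that template, there are two genuine problems. First, your shortcut ``the fields with $\nabla\lambda_k\cdot r_k\neq 0$ are genuinely nonlinear and contribute no Cantor part at all'' conflates pointwise nondegeneracy with genuine nonlinearity on all of $\Omega$; in this theorem $\nabla\lambda_k\cdot r_k$ may change sign or vanish on part of $\Omega$ for every $k$, so no field can be discharged this way and each must go through the localization. Second, and more seriously, your localization to the planar sublevel sets $\{\nabla\lambda_i\cdot r_i>\delta\}$ does not close: these are sublevel sets of a $\BV$ composition, they are not bounded by characteristics, and the Ole\u{\i}nik/balance mechanism of Theorem~\ref{T:SBVGNbalance1den} only closes on regions whose lateral boundaries are generalized characteristics (so that waves of the $i$-th family cannot exit laterally and the entering flux is controlled by interaction/cancellation terms). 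You correctly flag the boundary flux as ``the main obstacle,'' but that obstacle is precisely the content of the proof, and the paper resolves it differently: using the tame oscillation condition one covers $\{\nabla\lambda_i(u)\cdot r_i(u)\neq 0\}$ by countably many \emph{characteristic triangles} $T_k$ on which $u$ ranges in a region of $\Omega$ where $\nabla\lambda_i\cdot r_i$ has a definite sign bound, plus a residual set $S_0\times\R$ with countable time-projection; Theorem~\ref{T:SBVGNbalance1den} (or Theorem~\ref{T:main} in the scalar case) is then applied on each triangle and the exceptional times are collected over the countable family (see the proof of Theorem~\ref{T:corolloary} and~\cite{BYTrieste},~\cite{ACM2}). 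Without replacing your $\delta$-sublevel sets by such a characteristic covering, the monotonicity argument you invoke has no reason to produce only countably many exceptional times. The final assembly of a single $\sigma$-compact $K$ is fine in spirit but also leans on the inner regularity and continuity-point selection that the paper sets up after Definition~\ref{D:cantorPart1dn}.
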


{ After stating also the more general results in the caso fo systems, we now go on picturing how the scalar case works.}
The plan of this note is the following
\begin{itemize}
\item[\S~\ref{S:ApproximationScheme}:] In order to fix notations, we summmarize a classical approximation scheme for entropy solutions to balance laws, namely the operator splitting method jointly with front-tracking approximation. The scheme provides a piecewise constant function $u_{\nu}$ which converges in $L^{1}_{\loc}(\R)$ to the entropy solution of the balance law~\eqref{eq:CP1d}.
When the flux is uniformly convex, one can moreover isolate in $D_{x}u_{\nu}(t)$ a part which $w^{*}$-converges to the purely atomic part $D^{\jump}u(t)$ of $D_{x}u(t)$, and the remaining part which $w^{*}$-converges to the continuous part $D_{x}^{\cont}u$ of $D_{x}u(t)$.
\item[\S~\ref{S:measures}:]
We introduce relevant measures which will be important tools for the regularity estimates.
\item[\S~\ref{S:Sketch}:] We sketch the arguments of the proof.
\end{itemize}

\section{Approximation scheme and structure of solutions}
\label{S:ApproximationScheme}

\subsection{Review of the algorithm}
\label{saec:PCA1d}
Since the one dimensional front tracking approximation of entropy solutions to a $1d$-conservation law is well established, and since operator splitting {methods} are quite standard in PDEs, we just remind here essential features of the schemes in order to fix notations.
We invite the reader eager to better details to refer for example to~\cite[\S~6]{BressanBook} for the front-tracking scheme and to~\cite[\S~3]{CP},~\cite[\S~4.5]{HoldeRisebro} for an operator splitting scheme.

\subsubsection{The Cauchy problem without source}\label{sec:frontTRacking}
When the initial datum $\overline u$ has just one jump at the point $x_0$ from the left limit $\overline u(x_0-)$ to the right limit $\overline u(x_0+)$, and if the source term is vanishing, then {the} Cauchy problem is called Riemann problem: the solution is given by a self-similar profile which is the discretization, with some threshold $\varepsilon_{\nu}$, of 
\begin{itemize}
\item the profile of the inverse of the derivative of the convex envelope $f_{*}$ of $f$ in $[\overline u(x_0-),\overline u(x_0+)]$ when $\overline u(x_0-)$ is less than $\overline u(x_0+)$: for $t>0$, properly defining such inverse, $u(t,x_{0}+h)=(\frac d{du}f_{*}^{\nu})^{-1}(h/t)$.
\item
If instead $\overline u(x_0-)>\overline u(x_0+)$, then the same is done with the concave envelope of $f$ in $[\overline u(x_0+),\overline u(x_0-)]$.
\end{itemize}
For convex fluxes, downwards jumps will be traveling, as they are, with the Rankine-Hugoniot speed \(\frac{f(\overline u(x_0+))-f(\overline u(x_0-))}{\overline u(x_0+)-\overline u(x_0-)}\); for increasing jumps, from the discontinuity point there is the discretization of a continuous wave fan opening up.
In particular, the maximum principle holds, differently from the case of $1d$-systems.

In order to provide a piecewise constant approximation to the entropy solution to the Cauchy problem~\eqref{eq:CP1d} more generally, the starting point is fixing a piecewise constant approximation $\overline u_{\nu}$ of the initial datum $\overline u$, requiring also that $\TV(\overline u_{\nu})\leq \TV(\overline u)$.
If $x_{1}\,,\,\dots\,,\,x_{k}$ are the points where $\overline u_{\nu}$ jumps, at each point $x_{i}$ one solves the Riemann problem corresponding to the jump from $\overline u_{\nu}(x_{i}-)$ to $\overline u_{\nu}(x_{i}+)$: this provides a local approximate solution just by patching together the solutions to the Riemann problem.
When the source is not present, this solution can be prolonged up to a first time $t_{1}>0$ where two or more lines of discontinuity, coming from the solution to adjacent Riemann problems at time \(t=0\), bump one into the other.
At this time one solves the newly arose Riemann problem, prolonging the solution to a later time $t_{2}>t_{1}$ where a second set of wave-front interactions take place, and so on.

For a single equation, since the total variation is not increasing, there are simpler argument to conclude that the number of interaction is finite, producing in total a finite number of discontinuity fronts: the procedure works in defining a global in time approximation $u_{\nu}$.
We rather introduce key functionals that are essential to control the total variation and to provide key estimates for compactness in the case of systems, granting in turn to find the entropy solution as a limit solution of such approximations.
Such functionals are the total variation, that along the approximate solution of a single equation is non-increasing, and Glimm interaction functionals.

\begin{definition}
If $v=\sum_{i=0}^{L}v_{i}$ is piecewise constant, $\TV(v)\leq M$ and $0<\kappa< (8M)^{-1}$, consider the functionals
\begin{align}\label{eq:Ups1d}
&\TV(v)=\sum_{i=1}^{L}\abs{v_{i}-v_{i-1}}
&&\Qg(v)= \sum_{i=1}^{L}  \sum_{k=i+1}^{L}   \abs{v_{i}-v_{i-1}}\cdot \abs{v_{k}-v_{k-1}}
&&
\Upsilon_{}(v)=\TV(v){+}\kappa\Qg(v)\,.
\end{align} 
We can also consider the negative part of the variation, defined as $\TV^{-}(v)=\sum_{i=1}^{L}\abs{v_{i}-v_{i-1}}\delta_{\{v_{i}>v_{i-1}\}}$.
\end{definition}
\begin{remark}\label{R:sloppyEstFunct}
When there is an interaction say among a wave of size $\widetilde s$ and a wave of size $ {s}$ that gets cancelled, the total variation decreases of $2\abs{s}$ and $\Qg$ {does not increase}.
In particular $\Upsilon$ decreases more than $ \abs{s }$.
When there is an interaction without cancellation $\TV$ remains constant, as the outgoing size is the some of the incoming ones. Nevertheless, $\Qg$ {decreases} of $\abs{2\widetilde s s}$ and $\Upsilon$ decreases of $2\kappa\abs{\widetilde s s}$.
In particular, $\Upsilon$ is monotone along the approximate solution of a single equation, strictly decreasing at interaction times of at least $\kappa \varepsilon_{\nu}^{2}$, since by construction $\varepsilon_\nu<1$ and sizes of waves are at least $\varepsilon_\nu$.
\end{remark}

\subsubsection{The Cauchy problem with source}
\label{Ss:withSource}
\phantom{}{Be given two sequences $
\{ \tau_{\nu} \}_{\nu\in\nat}$, $\{ \eps_{\nu} \}_{\nu\in\nat} $ with $0<\tau_{\nu}\leq \eps_\nu \downarrow 0$.}
We first approximate $g$ with a function~$g_{\nu}$ piecewise constant in the variable $x$ as follows:
\begin{subequations}
\label{E:discretizationSource}
\begin{equation}
\label{eq:gnu1}
g_\nu(t,x,v)\doteq \sum_{j\in\Z} \chi_{[j\eps_\nu,(j+1)\eps_\nu)}(x)
g_j(t,v)\,,
\end{equation}
where $\chi_I$ is the characteristic function of the set $I$, and were we define the average
\begin{equation}
\label{eq:gnu2}
g_j(t,v) = \frac{1}{\eps_\nu} \int_{j\eps_\nu}^{(j+1)\eps_\nu}
g(t,x,v)\, dx\,.
\end{equation}
\end{subequations}
Then, the construction of the approximation $w_{\nu}=w_{\nu}(t,x)$
essentially consists of the following recursive steps:
\begin{enumerate}
\item
We apply a front tracking algorithm as breifly recalled in~\S~\ref{sec:frontTRacking} to construct
an $\eps_\nu$-approximate front tracking solution $w_{\nu}  =u_{\nu}  $
in the time interval $0<t<\tau_{\nu}$.
\item\label{item:sourcecorrection}
At $t=\tau_{\nu}$ we correct the term $w_{\nu} (\tau_{\nu}-,\cdot)$
by setting
\ba
\label{E:correctionsource}
w_{\nu} (\tau_{\nu}+,\cdot)= u_{\nu} (\tau_{\nu}-,\cdot)+
\tau_{\nu} g_{\nu} \big( \tau_{\nu} , \cdot, u_{\nu} (\tau_{\nu}-,\cdot) \big)\,,
\ea
which turns out to be piecewise constant by construction. Slight varying the speeds, we can also assume for simplicity that no jump of $u_{\nu}$ is present at $(\tau_{\nu},j\eps_\nu)$ and that no interaction point lies on the line $t=\tau_{\nu}$.
\item
For  $n\geq 1$, once $w_{\nu} (n\tau_{\nu}+,\cdot)$ is given,
we again use the algorithm as breifly recalled in~\S~\ref{sec:frontTRacking} to construct
an $\eps_\nu$-approximate front tracking solution in the time interval
$n\tau_{\nu}<t< (n+1)\tau_{\nu}$.
\item\label{item:updateSource}
Similarly to what done above at $t=\tau_{\nu}$,
at $t=(n+1)\tau_{\nu}$ we correct the term $w_{\nu} ((n+1)\tau_{\nu}-,\cdot)$
by setting
$$
w_{\nu} \big( (n+1)\tau_{\nu}+,\cdot \big)= w_{\nu} \big(
(n+1) \tau_{\nu}-,\cdot \big)+
\tau_{\nu} g_{\nu} \big( (n+1)\tau_{\nu} , \cdot, w_{\nu}
\big( (n+1)\tau_{\nu}-,\cdot \big) \big)\,.
$$
\end{enumerate}

\vskip\baselineskip
 
We summarize the approximation result using functional notations more suited to the case of systems.

\begin{theorem}
\label{T:localConv1d}
Let $\dssb, T>0$.
There is a constants $G>0$, only depending on $f$, $g$ and $\dssb$, and a constant $\kappa>0$, depending also on $T$, such that for initial data $\overline{\uu}$ in the closed domain
\[
\mathfrak D_{p}(\dssb):=\left\{\uu\in L ^{1}(\R;\R^{ })\cap \BV(\R;\R^{ })\text{ piecewise constant s.t.~} \Upsilon(\uu)\leq\dssb\right\}
\]
the algorithm described in \S~\ref{saec:PCA1d}, related to~\eqref{eq:CP1d}, defines for $t\in[0,T]$ and for every $\nu$ an approximating function
\bel{E:domt1d}
\ww_{\nu}(t,\cdot)\in\mathfrak D_{p}\left( \dssb+Gt\right)\,.
\end{equation} 

This approximating function $w_{\nu}$ converges to the entropy solution $u$ of the Cauchy problem in~\eqref{eq:CP1d}.
It satisfies, more precisely, the following comparison estimate with the entropy solution~$\vSC{t}{h}[v]$ of the Cauchy problem in~\eqref{eq:CP1d} starting at time $h$ with datum $\overline u=v$: there are a constant $C$ and a function $o(s)$, depending only on $f$, $g$, $\dssb$, $T$ and such that $o(s)/s\to0$ if $s\to0$, that satisfy for $n\in\nat$
\bel{E:uniqEst}
\big\lVert \ww_{\nu}(n \tau_{\nu}+,\cdot) - \vSC{n \tau_{\nu}}{(n-1)\tau_{\nu}} \left[\ww_{\nu}((n-1)\tau_{\nu}+,\cdot)\right]\big\rVert_{L^{1}} \leq \left(o(\tau_{\nu}) +C\eps_{\nu}\tau_{\nu}
\right)
\ .
\end{equation}
\end{theorem}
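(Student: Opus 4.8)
The plan is to prove the three assertions in turn: the a priori bound $\ww_{\nu}(t,\cdot)\in\mathfrak{D}_{p}(\dssb+Gt)$ of~\eqref{E:domt1d}, the one-step comparison estimate~\eqref{E:uniqEst}, and the convergence $\ww_{\nu}\to u$, deducing the last from the second by the $L^{1}$-stability of the exact inhomogeneous flow $\vSC{t}{h}$. The a priori bound is essentially bookkeeping on $\Upsilon$; the comparison estimate is the analytic heart; the convergence is then a telescoping argument.

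For the a priori bound I would separate the two mechanisms that change $\Upsilon$. On each open slab $\big(n\tau_{\nu},(n+1)\tau_{\nu}\big)$ the function $\ww_{\nu}$ is a homogeneous front-tracking solution, so by Remark~\ref{R:sloppyEstFunct} the quantity $\Upsilon(\ww_{\nu}(t))$ is non-increasing there, strictly decreasing at interactions. The only increases occur at the correction times $n\tau_{\nu}$, where~\eqref{E:correctionsource} adds the piecewise-constant term $\tau_{\nu}\,g_{\nu}\big(n\tau_{\nu},\cdot,\ww_{\nu}(n\tau_{\nu}-,\cdot)\big)$. I would bound the resulting increase of $\TV$ by splitting it into the contribution of the grid jumps of $g_{\nu}$ at the points $j\eps_{\nu}$, which assumption~\textbf{(G)} controls through $\tau_{\nu}\norm{\alpha}_{L^{1}}$, and the contribution of the $u$-dependence of $g$ across the existing jumps of $\ww_{\nu}$, bounded by $\tau_{\nu}\,\mathrm{Lip}_{u}(g)\,\TV(\ww_{\nu})$; the increase of $\Qg$ is in turn at most $\TV(\ww_{\nu})$ times the increase of $\TV$. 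Summing over the $\lfloor t/\tau_{\nu}\rfloor$ correction steps and using the slab-wise monotonicity, a discrete Gronwall inequality produces the affine-in-$t$ bound $\Upsilon(\ww_{\nu}(t))\le\dssb+Gt$ on $[0,T]$, the admissibility constraint $\kappa<(8M)^{-1}$ being maintained up to the a priori size $M=\dssb+GT$ by choosing $\kappa$ depending on $T$.

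The core step, and the one I expect to be the main obstacle, is the one-step estimate~\eqref{E:uniqEst}, which compares on a single slab the split construction with the exact flow $\vSC{n\tau_{\nu}}{(n-1)\tau_{\nu}}$. I would decompose the $L^{1}$ discrepancy into three contributions estimated independently: first, the front-tracking discretization error committed while solving the homogeneous equation over time $\tau_{\nu}$, which the sharp front-tracking estimates (cf.~\cite[\S~6]{BressanBook}) bound by $C\eps_{\nu}\tau_{\nu}$; second, the operator-splitting error from replacing the continuously acting source $\int_{(n-1)\tau_{\nu}}^{n\tau_{\nu}}g\,ds$ by the single kick $\tau_{\nu}g(n\tau_{\nu},\cdot)$, which the mere continuity in $t$ of $g$ renders $o(\tau_{\nu})$ uniformly; third, the error from the spatial averaging $g\mapsto g_{\nu}$, again of order $C\eps_{\nu}\tau_{\nu}$ thanks to the $L^{1}$ bound $\abs{g_{x}}\le\alpha$. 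The difficulty is to control these three errors simultaneously while propagating them through $\vSC{\cdot}{\cdot}$ with its source-dependent $L^{1}$-Lipschitz constant, and to verify that the modulus $o(\tau_{\nu})$ is genuinely sublinear and uniform in $n$ and $\nu$ over $[0,T]$; here both the quantitative stability of the entropy solution and the uniform a priori bound just obtained are indispensable.

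Finally I would obtain convergence by telescoping. Writing $u(n\tau_{\nu})=\vSC{n\tau_{\nu}}{0}[\overline{u}]$ and using the semigroup identity $\vSC{n\tau_{\nu}}{0}=\vSC{n\tau_{\nu}}{(n-1)\tau_{\nu}}\circ\cdots\circ\vSC{\tau_{\nu}}{0}$ together with the $L^{1}$-Lipschitz dependence of $\vSC{\cdot}{\cdot}$ on its datum, I would sum the one-step errors~\eqref{E:uniqEst} to get
\[
\norm{\ww_{\nu}(n\tau_{\nu}+,\cdot)-\vSC{n\tau_{\nu}}{0}[\overline{u}_{\nu}]}_{L^{1}}\le C'\,n\,\big(o(\tau_{\nu})+C\eps_{\nu}\tau_{\nu}\big)\le C'T\Big(\frac{o(\tau_{\nu})}{\tau_{\nu}}+C\eps_{\nu}\Big),
\]
which tends to $0$ as $\nu\to\infty$, since $n\le T/\tau_{\nu}$, $o(\tau_{\nu})/\tau_{\nu}\to0$ and $\eps_{\nu}\to0$. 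Combined with $\overline{u}_{\nu}\to\overline{u}$ in $L^{1}$ and the right-continuity in time, this identifies the limit of $\ww_{\nu}$ in $L^{1}_{\loc}$ with the entropy solution $u$, completing the proof.
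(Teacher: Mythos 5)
The paper gives no proof of Theorem~\ref{T:localConv1d}: it is stated as a summary of the classical operator-splitting/front-tracking theory, with the reader referred to \cite[\S~3]{CP}, \cite[\S~4.5]{HoldeRisebro}, \cite[\S~6]{BressanBook} and \cite{ACM1}. Your three-part outline (monotonicity of $\Upsilon$ on the homogeneous slabs plus a controlled increase at the kick times, the one-step error split into front-tracking, time-splitting and spatial-averaging contributions, and a telescoping via the $L^{1}$-Lipschitz flow) is exactly the standard argument of those references, hence consistent with what the paper relies on; the one point you gloss over is that the naive discrete Gronwall step yields exponential rather than affine growth of $\Upsilon$, so reaching the stated form $\dssb+Gt$ with $G$ independent of $T$ requires an extra bootstrap on the fixed horizon $[0,T]$ rather than the bare Gronwall inequality you invoke.
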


\subsection{Fine convergence of discrete approximation}
\label{S:fineConv1d}
In this section we restrict to the case of a balance law~\eqref{eq:CP1d} with uniformly convex flux function $f$ and with a smooth source $g$ that satisfies Assumption \textbf{(G)}.

Apart from the fact that the approximation scheme converges to the entropy solution to the balance law, there are finer properties of the convergence: very roughly, big jumps in the approximation converge well to big jumps in the limit solution. Such finer properties are useful in order to study separately the evolution of the continuous and of the jump part of the spatial derivative of the entropy solution.

The first step toward that is to better understand what we mean by `big jumps in the approximation'.

\begin{definition}
\label{D:jumpApprox}
Let $u_\nu$ be an approximate solution constructed as in~\S~\ref{Ss:withSource}.
Let $0<\beta<1$.
A maximal, leftmost $\beta$-approximate discontinuity is any maximal (concerning set inclusion) closed polygonal line---parametrized with time in the $(t,x)$-plane---with nodes $(t_{0},x_{0})$, $(t_{1},x_{1})$, $\dots$, $(t_{n},x_{n})$, where $t_{0}\leq\dots\leq t_{n}$, such that
\begin{enumerate}
\item each node $(t_{k},x_{k})$, $k=1,\dots,n$ is an interaction point or an update time $n\tau_\nu$, with $n\in\N$;
\item
\label{definitionSF21d}
the segment $[\![(t_{k-1},x_{k-1}),(t_{k},x_{k})]\!]$ is the support of a discontinuity front with strength $|s(t)|\geq\beta/4$ and there is at least one time $ t^{*}\in[t_{0},t_{n}]$ such that $|s(t^{*})|\geq\beta$;
\item 
\label{definitionSF31d}
it stays on the left of any other polygonal line it intersects and having the above properties.
\end{enumerate}
We denote by $\J^{}_{\beta}(u_\nu)$ the family of maximal, leftmost $\beta$-approximate discontinuities of $u_{\nu}$, and their image by
\[
J_{\beta}(u_\nu)=\bigcup_{y\in \J^{}_{\beta}(u_\nu)}\{(t,y(t))\ :\ t_{0}\leq t\leq t_{k}\} \subset\R^{+}\times\R\,.
\]
\end{definition}

Notice that the set $\J^{}_{\beta}(u_\nu)$ enriches as $\beta\downarrow0$. Moreover, at $\beta$ fixed, two maximal, leftmost $\beta$-approximate discontinuities can meet only at a point where the one on the right ends, and the one on the left proceeds.

When $g=0$, as in the construction by~\cite{AmbrosioDeLellisNotes}, all $\beta$-approximate discontinuities fronts arising at time $t_{0}{\geq} 0$ can be traced back along characteristics, from the point they arise, up to time $0$, when $t_{0}>0$, determining disjoint intervals {$I_{k}$ with $\TV(\overline u_{\nu};I_{k})\geq\beta $}: the cardinality
\[\sharp \J^{}_{\beta}(u_\nu)=:M_{\beta}(u_\nu)\] of maximal, leftmost
$\beta$-approximate discontinuities---up to any fixed positive
time $T$---is thus of order
\bas
M_{\beta}(u_\nu)\lesssim\TV(\overline u) \beta^{-1}\;.
\eas

When $g$ is present, $\TV(\overline u) \beta^{-1}$ still bounds the number of fronts starting at initial time. We can moreover estimate fronts arising at time $t_{0}>0$ observing that the strength of that front must increase from an initial value $<\beta/4$ to some value $\geq \beta$: if $s$ denotes the size of such front and $s_{k}$ the size of the ones interacting with it at the node $(t_{k},x_{k})$, summing $\abs{s_{k}s(t_{k}+)}$ we get 
\[
\sum_{k=0}^{n}\abs{s_{k}s(t_{k}+)}\geq \left(\sum_{k=0}^{n}\abs{s_{k}}\right)\beta/4\geq \beta^{2}/8\,.\]
{We recall that} due to such interactions the (nonnegative) Glimm functional $\Upsilon$ decreases at least of ${\kappa}\beta^{2}/8$, {see} Remark~\ref{R:sloppyEstFunct}.
By the bound recalled in Theorem~\ref{T:localConv1d} we consequently estimate, up to any fixed positive
time $T$,
\ba\label{E:stimaDisc}
M_{\beta}(u_\nu)\leq   M_{\beta}\lesssim  \kappa^{-1}(\dssb+GT)\beta^{-2}\;.
\ea
Possibly with repetitions, we enumerate the maximal, leftmost $\beta$-approximate discontinuities of $u^{\nu}$ up to the index $ M_{\beta}$.
 
 The following approximation result in the case of systems follows from~\cite[Theorem~5,1]{ACM1}, generalization to the case of balance laws of~\cite[\S~10.3]{BressanBook}, and it provides a finer information than the convergence stated in Theorem~\ref{T:localConv1d}.
 
 \begin{definition}\label{D:approxJump}
 We define the approximate discontinuity measure and the approximate continuity measure of $u^{\nu}$ as
 \begin{align*}
&\wm_{\beta}^{\nu,\jump}:=  D_{x}u^{\nu}\restriction_{J_{\beta}(u_\nu)}
&&
\wm_{\beta}^{\nu,\cont}=D_x u^{\nu}-\wm_{\beta}^{\nu,\jump}.
\end{align*}
\end{definition}

In the following theorem we omit the fact that we are extracting subsequences, that we do not relabel.

\begin{theorem}\label{T:fineConv}
When $\nu\uparrow\infty$, each curve $\curva_{\beta,m}^{\nu}$ of $J_{\beta}(u_\nu)$ converges locally uniformly to a Lipschitz continuous curve 
\bas
&\curva_{\beta,m}:\left[t_{\beta,m}^{-},t_{\beta,m}^{+}\right]\to\R
&&\text{with $t_{\beta,m}^{\nu,-}\to t_{\beta,m}^{-}$ and $t_{\beta,m}^{\nu,+}\to t_{\beta,m}^{+}$,}
&&m=1,\dots ,M_{\beta}\,.
\eas 
Let $u$ be the entropy solution of the Cauchy problem in~\eqref{eq:CP1d}.
The jump measure $\wm^{\jump}$ of $u$ is concentrated on 
\bas
&J:=\bigcup_{\beta_{\nu}\downarrow0}\bigcup_{m=1}^{\bar M_{\beta}}\Graph(\curva_{\beta_{\nu},m}^{})
&&\text{for a sequence $\beta_{\nu}\downarrow0$}
\eas 
and the approximate discontinuity measure $\wm_{\beta_{\nu}}^{\nu,\jump}$ of $u^{\nu}$ converges weakly$^{*}$ to the jump measure $D_{x}^{\jump}u$.
\end{theorem}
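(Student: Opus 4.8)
The plan is to prove the three assertions in order: local uniform convergence of the approximate discontinuity curves, concentration of the limiting jump measure on the union $J$ of their limits, and weak$^*$ convergence of the approximate jump measures. Throughout I would work along a single subsequence, extracted once and for all by a diagonal procedure, and not relabelled.

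First I would establish the convergence of the curves by Ascoli--Arz\`ela. On each of its segments the polygonal line $\curva_{\beta,m}^{\nu}$ is the support of a shock front travelling with Rankine--Hugoniot speed; since $f\in W^{2,\infty}_{\loc}(\R)$ and, by the bound~\eqref{E:domt1d} together with finite speed of propagation, the functions $u_{\nu}(t,\cdot)$ take values in a fixed bounded interval, these speeds are bounded by a constant $\Lambda$ independent of $\nu$, $\beta$ and $m$; the nodes at the update times $n\tau_{\nu}$ do not affect this bound. Hence every $\curva_{\beta,m}^{\nu}$ is $\Lambda$-Lipschitz, and by~\eqref{E:stimaDisc} the number of curves is at most $M_{\beta}$, uniformly in $\nu$. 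A diagonal argument over $m=1,\dots,M_{\beta}$ then yields, along the subsequence, local uniform convergence of each $\curva_{\beta,m}^{\nu}$ to a $\Lambda$-Lipschitz limit $\curva_{\beta,m}$; the endpoints $t_{\beta,m}^{\nu,\pm}$ lie in $[0,T]$ and so converge, up to a further extraction, to $t_{\beta,m}^{\pm}$.

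Next I would show both that each limit curve carries a genuine jump and that no jump is missed. By property~\ref{definitionSF21d}, every front along $\curva_{\beta,m}^{\nu}$ has strength $|s(t)|\geq\beta/4$, so the two one-sided $L^{1}$ traces of $u_{\nu}$ across the curve differ, uniformly in time, by at least $\beta/4$; since $u_{\nu}\to u$ in $L^{1}_{\loc}$ (Theorem~\ref{T:localConv1d}) with $\TV(u_{\nu}(t,\cdot))$ uniformly bounded, the traces of $u$ across $\curva_{\beta,m}$ inherit a jump of size $\geq\beta/4$ at $\Ll^{1}$-a.e.\ time, whence $\Graph(\curva_{\beta,m})$ lies in the jump set of $u$ up to an $\wm^{\jump}$-null set. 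Conversely, fixing a sequence $\beta_{n}\downarrow0$, any atom of $D^{\jump}_{x}u(t,\cdot)$ sits at a point $(\bar t,\bar x)$ where $u$ jumps by some $\sigma>0$; choosing $n$ with $\beta_{n}<\sigma$, the $L^{1}_{\loc}$ convergence forces $u_{\nu}$, for all large $\nu$, to carry near $(\bar t,\bar x)$ a front of strength $\geq\beta_{n}/4$ attaining $\beta_{n}$, hence lying on some maximal leftmost $\curva_{\beta_{n},m}^{\nu}$, whose limit passes through $(\bar t,\bar x)$. As the jump set of $u$ is a countable union of Lipschitz graphs carrying $\wm^{\jump}$, summing over the countably many times and over $n$ shows $\wm^{\jump}$ is concentrated on $J$.

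For the weak$^{*}$ convergence $\wm^{\nu,\jump}_{\beta_{\nu}}\rightharpoonup^{*}D^{\jump}_{x}u$, I would start from $D_{x}u_{\nu}\rightharpoonup^{*}D_{x}u$ (from the $L^{1}_{\loc}$ convergence and the uniform $\BV$ bound) and split $D_{x}u=D^{\jump}_{x}u+D^{\cont}_{x}u$. One checks, along the chosen $\beta_{\nu}\downarrow0$, that the restriction to $J_{\beta_{\nu}}(u_{\nu})$ selects exactly the atomic part in the limit: each large-shock front survives as a bona fide atom by the previous step, while the complementary fronts carry, by~\eqref{E:stimaDisc} and the decay of $\Upsilon$ at interactions recalled in Remark~\ref{R:sloppyEstFunct}, only vanishingly small isolated mass, so that $\wm^{\nu,\cont}_{\beta_{\nu}}\rightharpoonup^{*}D^{\cont}_{x}u$ and the two limits do not interfere. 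The main obstacle is precisely this clean separation: one must simultaneously rule out that a sequence of genuine small shocks of the approximation accumulates into a spurious atom of the limit (which would wrongly enlarge $D^{\jump}_{x}u$) and that a true atom of $u$ escapes detection by the $\beta_{n}$-approximate discontinuities. Controlling both requires the fine convergence of~\cite[Theorem~5.1]{ACM1} together with~\eqref{E:stimaDisc}, which bounds the number of large fronts uniformly and forces a quantitative lower bound on their strengths along each curve.
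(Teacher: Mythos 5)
The paper does not actually prove Theorem~\ref{T:fineConv}: it imports it from \cite[Theorem~5.1]{ACM1}, the balance-law generalization of \cite[\S~10.3]{BressanBook}, so there is no internal argument to compare yours against. Your sketch reproduces the standard route of those references---uniform Lipschitz bounds plus Ascoli--Arzel\`a and a diagonal extraction for the curves, passage of the one-sided traces to the limit to show each limit graph carries a jump of size at least $\beta/4$, and a converse capture argument for the atoms of $D_{x}^{\jump}u$---and this is consistent with what the cited theorem establishes. The point to flag is that at the decisive step you do not give a proof either: the ``clean separation'' you correctly identify as the main obstacle (no spurious atom produced by accumulating sub-threshold fronts, no true atom escaping detection by the $\beta_{\nu}$-approximate discontinuities) is precisely the content of \cite[Theorem~5.1]{ACM1}, which you end up invoking. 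In particular, the assertion that ``the $L^{1}_{\loc}$ convergence forces $u_{\nu}$ to carry near $(\bar t,\bar x)$ a front of strength $\geq\beta_{n}/4$ attaining $\beta_{n}$'' does not follow from $L^{1}_{\loc}$ convergence alone: a priori the variation $\sigma$ could be resolved by many fronts each below the threshold, and ruling this out requires the structure of the approximation (for a uniformly convex flux nearby decreasing fronts merge in finite time, and the interaction bookkeeping of Remark~\ref{R:sloppyEstFunct} together with the counting bound~\eqref{E:stimaDisc} controls how long variation can remain dispersed before coalescing into a front exceeding $\beta_{n}$). So your proposal is an acceptable outline at the same level of rigour as the paper itself, but it should be presented as a reduction to \cite[Theorem~5.1]{ACM1} rather than as a self-contained proof.
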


\begin{remark}From now on, since the value of the threshold $\beta$ is paired with the index $\nu$ for suitable convergence of the selected sequence, in the seek of lighter notations we omit $\beta_{\nu}$ in denoting $\wm_{\beta_{\nu}}^{\nu,\cont}$: we just write $\wm_{}^{\nu,\cont}$.
We adopt a similar convention for $\wm_{\beta_{\nu}}^{\nu,\jump}$, that from now on will be called just $\wm_{}^{\nu,\jump}$.
\end{remark}

Of course, since $  D_{x}u^{\nu}$ converges weakly$^{*}$ to $ D_{x}u$, this means as well that the approximate continuity measure $\wm_{}^{\nu,\cont}$ converges weakly$^{*}$ to the continuity measure $\wm_{}^{\cont}$.

\begin{remark}
\label{R:relation}
Considering the meaning of the parameters, it is natural to choose the sequence $\beta_{\nu}\downarrow0$ so that 
$
\beta_{\nu}>4\pr{\varepsilon_{\nu}+(\dssb+GT)\tau_{\nu}}
$, vanishingly small,
where $\dssb+GT$ is the bound on the total variation stated in Theorem~\ref{T:localConv1d}.
\end{remark}
\nomenclature{$\beta_{\nu}$}{See Remark~\ref{R:relation}}

\section{Introduction of the relevant measures}
\label{S:measures}

In this section we restrict to the case of a balance law~\eqref{eq:CP1d} with uniformly convex flux function $f$ and with a smooth source $g$ that satisfies Assumption \textbf{(G)}.
We better consider the approximate discontinuity measure and the approximate continuity measure of $u^{\nu}$ introduced in Definition~\ref{D:approxJump}: even if we approximate the entropy solution $u$ with a piecewise constant function $u^{\nu}$, as better described in \S~\ref{S:fineConv1d} we can isolate 
\begin{itemize}
\item a part $\wm_{}^{\nu,\jump}$ of $D_{x}u^{\nu}$, lighter notation for $\wm_{\beta_\nu}^{\nu,\jump}$, which converges weakly$^*$ to the jump part of $D_{x}u$, and 
\item the remaining part $\wm_{}^{\nu,\cont}$ of $D_{x}u^{\nu}$ which converges weakly$^*$ to the continuous part of $D_{x}u$.
\end{itemize}
This separation is a key tool in order to approximate the distributions
\[
\mu^{\jump}=\partial_{t} \left( D_{x}^{\jump}u\right) + \partial_{x}\left(D_{x}^{\jump}f(u)\right)\,,
\qquad
\mu^{\cont}=\partial_{t} \left( D_{x}^{\cont}u\right) + \partial_{x}\left(D_{x}^{\cont}f(u)\right)
\]
that we would like to call jump balance measure and continuous balance measure.
In the case of a single conservation law, when $g=0$, their sum just vanishes: it is the wave balance measure
\[
\partial_{t} \left( D_{x}u^{ }(t,x)\right) + \partial_{x}\left( D_{x}f(u^{ }(t,x))\right)=\partial_{x}g(t,x,u(t,x))\,.
\]

We need as well to define a measure approximating the source term, related to the algorithm we apply.

\begin{definition}
\label{D:measuresnu1d}
We define $\nu$-\emph{approximate source measure} the purely atomic measure
\[
\mu^{\sour}_{\nu} (dt,dx)
:= \sum_{n=1}^{[\![T/\tau_{\nu}]\!]}\left({ \abs{ D_{x}u^{\nu}}(t-,dx)\delta_{n\tau_{\nu}}(dt)}+
\sum_{ j\in\Z}q_{j,n}
 \delta_{n\tau_{\nu}}(dt) \otimes\delta_{j\varepsilon_{\nu}}(dx)
\right)\tau_{\nu}\ ,
\qquad q_{j,n}:= \int_{(j-1)\eps_\nu}^{(j+1)\eps_\nu}\!\!\! \!\!\! \!\!\! \alpha(s)ds\ ,
\]
where $\alpha$ comes from Assumption \textbf{(G)} at Page~\pageref{Ass:G}. 
\nomenclature{$\mu_{\nu}^{\sour}$}{The $\nu$-approximate source measure in Definition~\ref{D:measuresnu}}%
\nomenclature{$\mu_{i}^{\nu} $}{The $\nu$-approximate wave-balance measure in Definition~\ref{D:measuresnu}}%
\nomenclature{$\mu_{i}^{\nu,\jump} $}{The $\nu$-jump-wave-balance measure in Definition~\ref{D:measuresnu}}%
We define $\nu$-\emph{wave-balance measure} the measure
\[
\mu_{}^{\nu} := \partial_{t} \left( D_{x}u^{\nu}\right) + \partial_{x}\left( D_{x}f(u^{\nu})\right)\;.
\]
We define $\nu$-\emph{jump-wave-balance measure} and $\nu$-\emph{continuous-wave-balance measure} the measures
\bas
\mu_{}^{\nu,\jump} &:= \partial_{t} \left(\wm_{}^{\nu,\jump}\right) + \partial_{x}\left( \widetilde{\lambda_{}^{\nu}}\wm_{}^{\nu,\jump} \right)\;,\\
\mu_{}^{\nu,\cont} &:= \partial_{t} \left(\wm_{}^{\nu,\cont}\right) + \partial_{x}\left( \widetilde{\lambda_{}^{\nu}}\wm_{}^{\nu,\cont} \right)=\mu_{}^{\nu}-\mu_{}^{\nu,\jump} \;,
\eas
where $\widetilde{\lambda_{}^{\nu}}$ is the Rankine-Hugoniot speed, equal to \(\frac{f(  u_\nu^r)-f(  u_\nu^\ell)}{ u_\nu^r- u_\nu^\ell}\), given the left and right limits of $ u^\nu$ at the jump.
\end{definition}

\begin{example}\label{Ex:contoVarSalto}
Consider an open rectangle \(R=(a,b)\times(c,d)\subseteq \R^+\times\R$ and a function $\varphi \in C^{\infty}_{c}(R)$.
Suppose there is a single discontinuity line $x=\overline x+\lambda t$, $a<t<b$, intersecting the rectangle \(R\) and let $u_{\nu}^{\ell}$ be the value of \(u_{\nu}\) on the left of the discontinuity line, $u_{\nu}^{r}$ the value of \(u_{\nu}\) on the right of the discontinuity line.
In particular, if such discontinuity is a $\beta_\nu$-approximate discontinuity, $\wm_{}^{\nu,\jump}(t)=(u_{\nu}^{r}-u_{\nu}^{\ell})\delta_{\overline x+\lambda t}(x)$ and $\lambda=\widetilde{\lambda_{}^{\nu}}$.
Then by definition
\bas
\int_{R} \varphi\,d\mu_{}^{\nu,\jump}&=-\int_{a}^{b}\left\{\int_{\R} \left(\pt \varphi +\lambda\px \varphi \right)  \left(t,\overline x+\lambda t\right)(u_{\nu}^{r}-u_{\nu}^{\ell})\delta_{(t,\overline x+\lambda t)}(x)\right\}\,dt
\\
&=-\int_{a}^{b}(u_{\nu}^{r}-u_{\nu}^{\ell})\left(\pt \varphi +\lambda\px \varphi \right)  \left(t,\overline x+\lambda t\right)\,dt
=-(u_{\nu}^{r}-u_{\nu}^{\ell})\int_{a}^{b}\ddt\left(\varphi   \left(t,\overline x+\lambda t\right) \right)   \,dt
\\
&=-(u_{\nu}^{r}-u_{\nu}^{\ell})\cdot \varphi   \left(t,\overline x+\lambda t\right)\Big|_{a}^{b}=0\,.
\eas
We computed that {the} measure $\mu_{}^{\nu,\jump}$ vanishes on $R$: it is present only where $\beta$-approximate discontinuities arise, stop, or change.

In a similar way, if the size $s=u_{\nu}^{r}-u_{\nu}^{\ell}$ of the $\beta$-approximate discontinuity was changing at the point $(\hat t,\hat x)\in R$, and with them the wave speed which is first $\lambda^{-}$ and later $\lambda^{+}$, then
\bas
\int_{R} \varphi\,d\mu_{}^{\nu}
&=
-(u_{\nu}^{r}-u_{\nu}^{\ell})(\hat t +)\cdot \varphi   \left(  t,  \overline x+\lambda^{+} t \right)\Big|_{\hat t}^{b}
-(u_{\nu}^{r}-u_{\nu}^{\ell})(\hat t -)\cdot \varphi   \left(  t,  \overline x+\lambda^{+} t \right)\Big|_{a}^{\hat t}
\\
&=\left(s(\hat t +)-s(\hat t -)\right)\cdot \varphi   \left(\hat t,\hat x \right) =0\,,
\eas
due to the compact support of the test function, so that $\mu_{}^{\nu,\jump}=p\delta_{\left(\hat t,\hat x\right)}$ with $p=s\left(\hat t +\right)-s\left(\hat t -\right)$.

As well, if instead at the point $(\hat t,\hat x)\in R$ two $\beta$-approximate discontinuities interact we compute $\mu_{}^{\nu}=p\delta_{\left(\hat t,\hat x\right)}$ with $p=s^{\mathrm{out}}-s_{1}^{\mathrm{in}} -s_{2}^{\mathrm{in}} $ the difference among the outgoing size $s^{\mathrm{out}}$ and the incoming ones $s_{1}^{\mathrm{in}}$, $s_{2}^{\mathrm{in}}$.
\end{example}

The first statement ensures that the names are properly chosen: the above distributions are indeed measures uniformly bounded in $\nu$, so that in the limit by compactness we could get a limit measure, generally not unique.

We recall that $\overline u_\nu$ is the approximate initial datum introduced in~\S~\ref{saec:PCA1d}.

\begin{lemma}
\label{L:estSource1d}
Suppose $f$ is convex.
The measures in Definition~\ref{D:measuresnu1d} satisfy the following bounds:
\bas
&\abs{\mu^{\nu}}\lesssim\mu^{\nu,\sour}_{}+\abs{D_{x}\overline u_{\nu}}(x)\delta_{0}(t)\,,
\\
0\leq  &
{\mu^{\nu,\sour}_{} }((t_{1},t_{2}])
\lesssim
 \pr{{ {\dssb+G T}}+ \norm{\alpha}_{L^{1}}} (t_{2}-t_{1}+\tau_{\nu})
\\
\pr{{ {\dssb+G T}}+ \norm{\alpha}_{L^{1}}}\pr{\kappa^{-1}\beta^{-1}+T} 
\lesssim&
\abs{\mu^{\nu,\jump}_{} } ([0,T]\times\R)
\lesssim
 \pr{{ {\dssb+G T}}+ \norm{\alpha}_{L^{1}}}  (1+T)
\eas 
for $0\leq t_{1}\leq t_{2}\leq T$, where $\dssb+GT $ is a bound on the total variation of $u^{\nu}$ given in Theorem~\ref{T:localConv1d}.

{By its definition, moreover,} $\abs{\mu^{\nu,\cont}_{} } ([0,T]\times\R) \leq \left(\abs{\mu^{\nu}}+\abs{{\mu^{\nu,\jump}_{} }}\right) ([0,T]\times\R)$.
\end{lemma}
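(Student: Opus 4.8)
The plan is to pin down exactly where each of the measures in Definition~\ref{D:measuresnu1d} is concentrated and then read off the masses of the resulting atoms, using as the main engine the local computations of Example~\ref{Ex:contoVarSalto}. The key structural remark is that, between two consecutive update times $n\tau_\nu$ and $(n+1)\tau_\nu$, the function $u^\nu$ is a front-tracking solution of the \emph{homogeneous} conservation law: each front is a genuine Rankine--Hugoniot jump, so by the first computation in Example~\ref{Ex:contoVarSalto} the measure $\mu^\nu$ carries no mass along any front; and at an interaction the scalar conservation of $u$ forces the outgoing wave sizes to sum to the incoming ones, so the interaction atom $s^{\mathrm{out}}-s_1^{\mathrm{in}}-s_2^{\mathrm{in}}$ vanishes as well. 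Hence $\mu^\nu$ lives only on the slices $\{t=0\}$ and $\{t=n\tau_\nu\}$.

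For the first bound I would then compute the temporal atom of $\mu^\nu$ at an update time. Since $\px(D_x f(u^\nu))$ produces no atom in time, this atom equals $\pt(D_x u^\nu)$ at $t=n\tau_\nu$, i.e. $\tau_\nu\,D_x h$ with $h(x):=g_\nu\big(n\tau_\nu,x,u^\nu(n\tau_\nu-,x)\big)$ by the correction rule~\eqref{E:correctionsource}. The function $h$ is piecewise constant, and I would split its jumps into two families: those located at fronts of $u^\nu$, where Lipschitz continuity of $g$ in $u$ gives a jump $\lesssim\abs{u^\nu(\cdot+)-u^\nu(\cdot-)}$, hence controlled by $\abs{D_x u^\nu}(n\tau_\nu-)$; and those at the grid points $x=j\eps_\nu$, where using~\eqref{eq:gnu2} and $\abs{g_x}\le\alpha$ one estimates $\abs{g_j-g_{j-1}}\le q_{j,n}$. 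Summing over $n$ reproduces exactly $\abs{\mu^\nu}\lesssim\mu^{\nu,\sour}_{}+\abs{D_x\overline u_\nu}\delta_0$, the last term accounting for the slice $\{t=0\}$.

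The estimate on $\mu^{\nu,\sour}_{}$ is then a direct summation. Nonnegativity is immediate since $\alpha\ge0$ and $\abs{D_x u^\nu}\ge0$. For the upper bound, on each slice $t=n\tau_\nu$ one has $\abs{D_x u^\nu}(n\tau_\nu-,\R)=\TV(u^\nu(n\tau_\nu-))\le\dssb+GT$ by Theorem~\ref{T:localConv1d}, while $\sum_{j}q_{j,n}=2\norm{\alpha}_{L^1}$ because each point of $\R$ lies in exactly two of the intervals $[(j-1)\eps_\nu,(j+1)\eps_\nu]$. Multiplying by $\tau_\nu$ and by the number of update times in $(t_1,t_2]$, which is at most $(t_2-t_1)/\tau_\nu+1$, yields the claimed bound $\lesssim(\dssb+GT+\norm{\alpha}_{L^1})(t_2-t_1+\tau_\nu)$.

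The genuinely delicate point is the estimate on $\abs{\mu^{\nu,\jump}_{}}$, which I expect to be the main obstacle. By the second and third computations in Example~\ref{Ex:contoVarSalto} this measure is purely atomic, with an atom wherever a maximal $\beta$-approximate discontinuity is created, is destroyed, or changes size, the atom's mass being the corresponding variation of the tracked size $s$. I would separate the atoms sitting at update times (absorbed into the same source estimate as above, contributing the term in $T$) from those produced at interaction nodes inside the strips. For the latter, each relevant interaction involves a front of size $\ge\beta/4$, so refining the argument leading to~\eqref{E:stimaDisc} and weighting by size, every such atom of mass $m$ forces a decrease of the Glimm functional $\Upsilon$ of at least $\kappa\,\tfrac{\beta}{4}\,m$ (by Remark~\ref{R:sloppyEstFunct}); summing and using that the total decrease of $\Upsilon$ is $\lesssim\dssb+GT$ gives a contribution of order $\kappa^{-1}(\dssb+GT)\beta^{-1}$. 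The hard part is precisely this bookkeeping: one must charge each atom to a distinct, quantified decrease of $\Upsilon$ without double counting, taking care of how size changes are attributed when two $\beta$-curves meet, which by the remark after Definition~\ref{D:jumpApprox} can only occur where the right one ends and the left one continues. Finally, the bound on $\abs{\mu^{\nu,\cont}_{}}$ is immediate from $\mu^{\nu,\cont}_{}=\mu^\nu-\mu^{\nu,\jump}_{}$ and the triangle inequality.
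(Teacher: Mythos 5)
Your treatment of the first two estimates is correct and essentially identical to the paper's: $\mu^{\nu}$ is purely atomic, supported on $\{t=0\}$ and on the update points $(n\tau_{\nu},j\eps_{\nu})$ (along fronts Rankine--Hugoniot kills the mass, and at interactions the scalar sizes add), and the atom at an update point is split exactly as you do into a Lipschitz-in-$u$ contribution $\lesssim L\abs{u_{+}-u_{-}}$ and a grid contribution $\leq q_{j,n}$ coming from $\abs{g_{x}}\leq\alpha$; the bound on $\mu^{\nu,\sour}$ is the same counting of slices. Your closing remark on $\mu^{\nu,\cont}$ is also the paper's one-line argument.

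The gap is in the third estimate, and it is precisely where you flagged the difficulty. Your bookkeeping charges every atom of $\mu^{\nu,\jump}$, regardless of sign, through the mechanism ``mass $m$ at a node forces a decrease of $\Upsilon$ of at least $\kappa\beta m/4$''. Two problems follow. First, this is not literally true at the \emph{creation} node of a maximal, leftmost $\beta$-approximate discontinuity born at $t_{0}>0$: there the atom has size up to about $\beta/2$ while the local decrease of $\Upsilon$ at that node can be arbitrarily small ($2\kappa\abs{s_{0}s_{0}'}$ with both factors possibly tiny); the paper instead charges that atom against the \emph{cumulative} future decrease of $\Upsilon$ along the whole curve, using that the front must grow from $<\beta/4$ to $\geq\beta$ --- this is the refinement of the argument for~\eqref{E:stimaDisc} that you gesture at, but it is a per-curve, not per-node, charge. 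Second, and more importantly, a uniform charge can only yield $\abs{\mu^{\nu,\jump}}([0,T]\times\R)\lesssim\pr{\dssb+GT+\norm{\alpha}_{L^{1}}}\pr{\kappa^{-1}\beta^{-1}+T}$, i.e.\ the \emph{left}-hand quantity of the two-sided estimate; it cannot produce the $\beta$-independent bound $\pr{\dssb+GT+\norm{\alpha}_{L^{1}}}(1+T)$ on the other side. The point you miss is that the estimate is sign-asymmetric: the paper runs a case analysis showing that \emph{positive} atoms occur only at cancellations with rarefaction fronts, at terminal nodes, and at update times, where they are controlled directly by $\TV^{-}(\Upsilon(u_{\nu}))\lesssim\dssb+GT$ and by $\mu^{\nu,\sour}$ with no $\beta^{-1}$ loss, whereas only the \emph{negative} atoms (creation nodes at positive times, absorption of sub-threshold shocks) need the $\kappa^{-1}\beta^{-1}$ factor. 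Without separating the two signs you cannot recover both sides of the stated inequality.
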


\begin{proof}[Proof of the bound on $\mu_{}^{\nu}$ and $\mu^{\nu,\sour}_{}$]
The bound on $\mu^{\nu,\sour}_{}$ at any time $t=n\tau_{\nu}$ is $\abs{D_{x}u^{\nu}}(n\tau_{\nu})+2 \norm{\alpha}_{L^{1}}$ {and it vanishes out of those times}.
As in a time interval $(t_{1},t_{2}]$ there are at most $[\![(t_{2}-t_{1})/\tau_{\nu}]\!]+1$ such times, the desired estimate follows.

A direct computation {similar to the one in Example~\ref{Ex:contoVarSalto}} shows that $\mu_{}^{\nu}$ is the purely atomic measure given by
\ba\label{E:muinuexp}
\mu_{}^{\nu}=\sum_{n=1}^{[\![T/\tau_{\nu}]\!]} \sum_{ j\in\Z}p_{j,n}\delta_{(n\tau_{\nu},j\varepsilon_{\nu})}(t,x) +\left[D_{x}\overline u_{\nu}\right](x)\delta_{0}(t)
\ea
where $p_{j,n}$ is the difference  among the outgoing size of the jump and the incoming one, at the update point $(n\tau_{\nu},j\varepsilon_{\nu})$, if present.
By the definition~\eqref{eq:gnu1}, calling $L$ the Lipschitz constant of $g$ in Assumption \textbf{(G)} at Page~\pageref{Ass:G} and setting $u_{+}=u_{\nu}(n\tau_{\nu}, j\varepsilon_{\nu}+)$ and $u_{-}=u_{\nu}(n\tau_{\nu}, j\varepsilon_{\nu}-)$, we get
\[
\abs{g_j(n\tau_{\nu},u_{+})-g_{j-1}(n\tau_{\nu},u_{-})}
\leq
\frac{1}{\eps_\nu} \int_{j\eps_\nu}^{(j+1)\eps_\nu}
\abs{g(t,x,u_{+}) - g(t,x- \eps_\nu,u_{-})}\, dx
\leq 
L\abs{u_{+}-u_{-}}+ \int_{(j-1)\eps_\nu}^{(j+1)\eps_\nu}\!\!\! \!\!\! \!\!\! \alpha(s)ds
\,.
\]
Following the algorithm at Point~\eqref{item:updateSource}, Page~\pageref{item:updateSource}, we thus see that $\mu_{}^{\nu}\leq (1+L)\mu^{\nu,\sour}_{}$ for $t>0${: indeed
\begin{align*} 
\abs{\left[u_{+}^{r}-u_{+}^{\ell}\right]
-
\left[u_{-}^{r}-u_{-}^{\ell}\right]}
&= 
\tau_{\nu} \abs{g_{\nu} \big( t_{n} ,x_{j}+,u_{+}^{r} \big) - g_{\nu}  \big(t_{n} , x_{j}-,u_{+}^{\ell} \big)-g_{\nu} \big( t_{n},x_{j}+,u_{+}^{r} \big) + g_{\nu}  \big(,t_{n}, x_{j}-,u_{+}^{\ell} \big) }
\\
&= 
\tau_{\nu} \abs{g_j(t_{n},u_{-}^{r})-g_{j-1}(t_{n},u_{-}^{\ell})-g_j(t_{n},u_{-}^{r})+g_{j-1}(t_{n},u_{-}^{\ell})}
\end{align*}
where $t_{n}=(n+1)\tau_{\nu}$, $x_{j}= j\varepsilon_{\nu}$, $u_{+}^{r}=u_{\nu}(t_{n}+,x_{j}+)$, $u_{+}^{\ell}=u_{\nu}(t_{n}+, x_{j}-)$,
$u_{-}^{r}=u_{\nu}(t_{n}-,x_{j}+)$, $u_{-}^{\ell}=u_{\nu}(t_{n}+,x_{j}-)$}.
\end{proof}

\begin{proof}[Proof of the bound on $\mu_{i}^{\nu,\jump}$]
A direct computation {similar to the one in Example~\ref{Ex:contoVarSalto}} shows that $\mu_{}^{\nu,\jump}$ is the purely atomic measure given by
\ba\label{E:muinuexp}
\mu_{}^{\nu,\jump}= \sum_{\text{nodes in }J_{\beta}(u_\nu)}q_{k}\delta_{(t_{k},x_{k})}(t,x) +\sum_{n=1}^{[\![T/\tau_{\nu}]\!]} \sum_{ j\in\Z}p_{j,n}\delta_{(n\tau_{\nu},j\varepsilon_{\nu})}(t,x) 
\ea
where the first sum runs over nodes of maximal, leftmost $\beta$-approximate discontinuities in Definition~\ref{D:jumpApprox}, 
\begin{itemize}
\item $q_{k}$ is the difference among the outgoing size of the jump and the incoming ones that correspond to $\beta$-approximate discontinuities, 
\item$p_{j,n}$ is the difference among  the outgoing size of the maximal, leftmost $\beta$-approximate discontinuity and the incoming one, at an update time, if present.
\end{itemize}
The term $p_{j,n}$ can be treated similarly to the case of the $\mu_{}^{\nu}$-measure and $\abs{p_{j,n}}\lesssim \mu_{}^{\nu,\sour}(t_{k},x_{k})$. We better discuss below interaction times.

For simplicity, suppose at most one interaction takes place at a single time, and no interaction takes place at update times $n\tau_{\nu}$.
In particular:

$q_{k}=0$ at interaction points $(t_{k},x_{k})$ of fronts that are maximal, leftmost $\beta$-approximate discontinuities.

$q_{k}>0$ at interaction points $(t_{k},x_{k})$ of a maximal, leftmost $\beta$-approximate discontinuity with a rarefaction front: in this case $q_{k}$ is equal to the decrease of the total variation and  $q_{k}\leq 2 (\Upsilon(t_{k}-)-\Upsilon(t_{k}+))$ by Remark~\ref{R:sloppyEstFunct}.

$q_{k}<0$ at interaction points $(t_{k},x_{k})$ of a maximal, leftmost $\beta$-approximate discontinuity $s'$ with a jump $s''$ which is not a $\beta$-approximate discontinuity: in this case $q_{k}=s''$ satisfies $-\kappa s''\beta/2\leq 2\kappa s' s''=\Upsilon(t_{k}-)-\Upsilon(t_{k}+)$ by Remark~\ref{R:sloppyEstFunct}.

Collecting the three cases above we get $ -2\kappa^{-1}\TV^{-}\left( \Upsilon(u_{\nu})\right)\leq \sum_{k} q_{k}\leq 2\TV^{-}\left( \Upsilon(u_{\nu})\right)$. 

$q_{k}<0$ at interaction points $(t_{0},x_{0})$ which are initial nodes of a maximal, leftmost $\beta$-approximate discontinuity. If $t_{k}=0$ then $q_{k}=[\wm_{\beta}^{\nu,\jump}(0)](x_{k})$.
If $t_{0}>0$ and $t_{0}\notin \Z\tau_{\nu}$ the strength of that front must increase from an initial value $<\beta/4$ to some value $\geq \beta$: if $s$ denotes the size of such front and $s_{k}$ the ones interacting with it at the node $(t_{k},x_{k})$, denoting by $s_{0}$, $s_{0}'$ respectively the smallest and largest jump interacting at $(t_{0},x_{0})$, summing $\abs{s_{k}s(t_{k}+)}$ we get 
\begin{equation}\label{E:rgeggerggregeg}
\sum_{k=0}^{n}\left(\Upsilon(t_{k}-)-\Upsilon(t_{k}+)\right)=2\kappa\abs{s_{0}s'_{0}}+2\kappa\sum_{k=1}^{n}\abs{s_{k}s(t_{k}-)}\geq \kappa \left(\sum_{k=0}^{n}\abs{s_{k}}\right)\beta/4
\end{equation}
so that we get the estimate for the mass of the initial point: {by Definition~\ref{D:jumpApprox} both $\abs{s_{0}}\leq \frac{\beta}{4}$ and $\abs{s_{0}'}\leq \frac{\beta}{4}$ thus}
\[
{\abs{s_{0}}+\abs{s_{0}'}=}\abs{q_{0}}{\leq}\abs{s_{0}'}+(\beta-\abs{s_{0}'}) {<} \frac{\beta}{4}+ \left(\sum_{k=0}^{n}\abs{s_{k}}\right)\stackrel{\eqref{E:rgeggerggregeg}}{\leq}  \frac{\beta}{4}+4\kappa^{-1}\beta^{-1}\sum_{k=0}^{n}\left(\Upsilon(t_{k}-)-\Upsilon(t_{k}+)\right)
\]
Summing over all initial points of a maximal, leftmost $\beta$-approximate discontinuity by~\eqref{E:stimaDisc} and Theorem~\ref{T:localConv1d} we get
\[
- \kappa^{-1}(\dssb+GT)\beta^{-1}\lesssim\sum_{\substack{\text{initial nodes of }\\\text{$\beta$-approximate discontinuities}}}\abs{q_{k}}\leq 0
\]

$q_{k}>0$ at interaction points $(t_{n},x_{n})$ which are terminal nodes of a maximal, leftmost $\beta$-approximate discontinuity. They are treated very similarly to initial points, a bit more easily because when cancellations happen then $\Upsilon $ decreases at least as the cancelled quantity, see Remark~\ref{R:sloppyEstFunct}.
Thus
\[
0\lesssim \sum_{\substack{\text{terminal nodes of }\\\text{$\beta$-approximate discontinuities}}}q_{k}  \lesssim \dssb+GT \,.
\qedhere
\]
\end{proof}

\begin{remark}\label{R:evolfpr}
One could equivalently consider the evolution of the spatial derivative of $f'(u^{\nu})$ instead of $u^{\nu}$, studying $ \eta^{\nu,\jump}= D_{x}f'(u^{\nu})\restriction_{J_{\beta}(u_\nu)}$ and $ \eta^{\nu,\cont}=D_{x}f'(u^{\nu})- D_{x}f'(u^{\nu})\restriction_{J_{\beta}(u_\nu)}$ together with the relative transport sources
\bas
&\xi^{\nu,\jump} := \partial_{t} \left(\eta^{\nu,\jump}\right) + \partial_{x}\left( \widetilde{\lambda_{}^{\nu}}\eta^{\nu,\jump} \right)\;,
&& \xi^{\nu,\cont} := \partial_{t} \left(\eta^{\nu,\cont}\right) + \partial_{x}\left( \widetilde{\lambda_{}^{\nu}}\eta^{\nu,\cont} \right)=\mu_{}^{\nu}-\mu_{}^{\nu,\jump} \;.
\eas
In the case of systems, actually, this measure is closer to the real object that is used, thanks to the genuine nonlinearity assumption $\nabla \lambda_{i}\cdot r_{i}>0$.
Precisely as in the proof of Lemma~\ref{L:estSource1d} one finds the equivalent estimates
\ba
&\abs{\xi^{\nu}}\lesssim \norm{f'' }_{\infty}\mu^{\nu,\sour}_{}+\abs{D_{x} f'(\overline  u_{\nu}(x))}\delta_{0}(t)\,,
\\
&\pr{{ {\dssb+G T}}+ \norm{\alpha}_{L^{1}}}\pr{\kappa^{-1}\beta^{-1}+T} \norm{f'' }_{\infty} \lesssim\abs{\xi^{\nu,\jump}_{} } ([0,T]\times\R)
\lesssim
 \pr{{ {\dssb+G T}}+ \norm{\alpha}_{L^{1}}}  (1+T) \norm{f'' }_{\infty}\,.
\ea
Of course, $\abs{\xi^{\nu,\cont}_{} } ([0,T]\times\R) \leq \left(\abs{\xi^{\nu}}+\abs{{\xi^{\nu,\jump}_{} }}\right) ([0,T]\times\R)$.
\end{remark}

\section{Sketch of main arguments}
\label{S:Sketch}

\subsection{$\SBV$-regularity for uniformly convex balance laws}

Among the many classical and powerful approximations for a scalar balance law in one space dimension, we refer to the operator splitting method summarized in \S~\ref{saec:PCA1d}.
Even if the approximation $u_{\nu}$ is piecewise constant, $\nu\uparrow\infty$, we can distinguish a part $\wm_{\beta_{\nu}}^{\nu,\jump}(t)$ in $D_{x}u_{\nu}(t)$ which $w^{*}$-converges to purely atomic part of $D_{x}u(t)$, denoted by $D^{\jump}_{x}u(t)$ and called jump part; see Definitions~\ref{D:jumpApprox}-\ref{D:approxJump} and Theorem~\ref{T:fineConv} for precise details.
In particular, $\beta_{\nu}\downarrow0$ has the meaning of a thresholds for detecting the relevant jumps in $D_{x}u_{\nu}$.
The measure $\wm_{\beta_{\nu}}^{\nu,\cont}(t)=D_{x}u_{\nu}(t)-\wm_{\beta_{\nu}}^{\nu,\jump}(t)$ in turn $w^{*}$-converges to the continuous part $D_{x}^{\cont}u(t)=D_{x}u(t)-D_{x}^{\jump}u(t)$ of $D_{x}u(t)$.

Differentiating in $x$ the balance law, we can split the evolution of the jump and of the continuous part of $D_{x}u_{\nu}$: this introduces the continuous balance measure $\mu_{}^{\nu,\cont}$ and jump balance measure $\mu_{}^{\nu,\jump}$, see Definition~\ref{D:measuresnu1d}: they are important tools for $\SBV$-regularity estimates.
When a source term is present, we simultaneously introduce a source measure $\mu^{\sour}_{\nu} $ to control the effect of the source.
All such measures we introduce are bounded uniformly in $\nu$ by the Glimm functional and by $ \norm{\alpha}_{L^{1}}$, locally in time: see Theorem~\ref{L:estSource1d} for precise estimates.
In particular, in the limit, by compactness and up to a subsequence, we are able to define limit measures $\mu_{}^{\jump}$, $\mu_{}^{\cont}$, $\mu^{\sour}$ with finite measure of strips $[t_{1}, t_{s}]\times \R$, for $0\leq t_{1}\leq t_{2}$.
In particular, $\mu^{\sour}$ is a relevant tool to control the other measures.

The most important role of the measures we introduce is providing balance estimates which lead to a two sided generalization of the fundamental Ol\"einink decay estimate: $\SBV$-regularity comes then as a corollary.
\newpage
\begin{lemma}
Denote by $[\cdot]^{+}$ and $[\cdot]^{-}$ respectively the positive and negative part of a measure.
{Under the assumptions of Thoerem~\ref{T:main}}, the following key estimates hold:
\begin{subequations}
\label{E:iOleinink1dn}
\ba
\label{E:iOleinink+1dn}
&0\leq[D_{x}^{\cont}u(t)]^- (B)\leq C\left(\frac{\Ll^1(B)}{t-s}+\abs{\xi^{\cont}}([s,t]\times \R) +\TV^{-}\left( \Upsilon(u_{\nu});(s,t]\right)\right)
&&
\text{if $0\leq s< t\leq T$,}
\\
&0\leq[D_{x}^{\cont}u(t)]^+ (B) \leq C\left(\frac{\Ll^1(B)}{s-t}+\norm{f''}_{\infty}\mu^{\sour}([s,t]\times \R)+\TV^{-}\left( \Upsilon(u_{\nu});(s,t]\right)\right)
&&
\text{if $0\leq t< s\leq T$,}
\label{E:iOleinink-1dn}
\ea
where $C$ only depends on $f$, $g$ and on the bound on the $\BV$-norm of the initial datum $\overline u$ and $B\subset \R$ is any Borel set.
\end{subequations}
\end{lemma}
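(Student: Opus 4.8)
The plan is to establish both one-sided bounds first for the piecewise constant approximations $u_{\nu}$, and then to pass to the limit $\nu\uparrow\infty$ using the fine convergence of Theorem~\ref{T:fineConv}. At the discrete level the starting point is the transport identity for the continuous part of the derivative: by Definition~\ref{D:measuresnu1d} the measure $\wm_{}^{\nu,\cont}$ (equivalently its $f'$-counterpart $\eta^{\nu,\cont}$ of Remark~\ref{R:evolfpr}) solves $\partial_{t}\wm_{}^{\nu,\cont}+\partial_{x}(\widetilde{\lambda_{}^{\nu}}\wm_{}^{\nu,\cont})=\mu_{}^{\nu,\cont}$. I would read off the evolution of $\wm_{}^{\nu,\cont}$ between two times $s$ and $t$ by following the fronts along the Rankine--Hugoniot characteristics and bookkeeping exactly three effects: pure transport, which relocates fronts without creating signed mass; the action of the source, charged to $\abs{\xi^{\nu,\cont}}$ and to $\mu^{\nu,\sour}$; and the interactions, each of which decreases the Glimm functional $\Upsilon$ by a definite amount (Remark~\ref{R:sloppyEstFunct}) and whose cumulated effect over a time strip is therefore controlled by $\TV^{-}(\Upsilon(u_{\nu}))$, exactly as in the mass estimate for $\mu_{}^{\nu,\jump}$ built on~\eqref{E:stimaDisc}.

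Estimate~\eqref{E:iOleinink+1dn}, which compares the continuous derivative at time $t$ with an earlier reference time $s<t$, is the classical Oleinik spreading bound, and it rests on the uniform convexity $f'(z+h)-f'(z)\geq \mathfrak c\,h$, which forces genuine characteristics to separate at rate at least $\mathfrak c$. Concretely, a front present at $(t,x)$ of strength $\sigma$ must have occupied, already at time $s$, a footprint whose width is at least $\mathfrak c\,\abs{\sigma}\,(t-s)$; since for a convex flux the footprints of distinct surviving fronts are pairwise disjoint, summing over the fronts lying in $B$ bounds their total strength by a multiple of $\Ll^1(B)/(\mathfrak c(t-s))$, while the remaining mass is either produced by the source, charged to $\abs{\xi^{\nu,\cont}}([s,t]\times\R)$, or lost through interactions, charged to $\TV^{-}(\Upsilon(u_{\nu});(s,t])$. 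The decisive feature is that when $\Ll^1(B)=0$ the Lebesgue term drops out and the whole continuous mass on $B$ is charged to the dissipation measures alone: this is the content that will later annihilate the Cantor part at all but countably many times.

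Estimate~\eqref{E:iOleinink-1dn}, which compares with a later reference time $s>t$, I would obtain symmetrically in the opposite time direction. Here the point is that the compressive continuous mass present at time $t$ cannot persist: by uniform convexity it must steepen and either resolve into jumps---being absorbed into the jump part $J_{\beta}(u_\nu)$ once it crosses the threshold---or cancel, by the later time $s$. The reverse characteristic comparison over $[t,s]$ produces the Lebesgue term $\Ll^1(B)/(s-t)$ for the absolutely continuous density, the production of mass by the discretized source is bounded, after converting derivatives of $u$ into derivatives of $f'(u)$ at the cost of $\norm{f''}_{\infty}$, by $\norm{f''}_{\infty}\mu^{\sour}([s,t]\times\R)$, and the resolutions into jumps together with the cancellations are again charged to $\TV^{-}(\Upsilon(u_{\nu});(s,t])$ through the decrease of $\Upsilon$. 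Both bounds are thereby obtained uniformly in $\nu$ from the estimates of Lemma~\ref{L:estSource1d} and Remark~\ref{R:evolfpr}.

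Finally I would pass to the limit. By Theorem~\ref{T:fineConv} and the remark following it $\wm_{}^{\nu,\cont}(t)$ converges weakly$^{*}$ to $D_{x}^{\cont}u(t)$, and the uniform bounds of Lemma~\ref{L:estSource1d} give weak$^{*}$ limits of $\xi^{\nu,\cont}$, of $\mu^{\nu,\sour}$ and of the dissipation $\TV^{-}(\Upsilon(u_{\nu}))$ with finite mass on every strip. The one-sided bounds survive the limit by the appropriate lower semicontinuity of the positive and negative parts under weak$^{*}$ convergence, combined with the inner/outer regularity of $B$, the Lebesgue term being insensitive to this approximation. I expect the genuine obstacle to be the compressive direction: unlike spreading, compression focuses, so the corresponding bound cannot be read off from the flow alone and must instead come from the fact that any over-threshold compressive front has already been transferred into $J_{\beta}(u_\nu)$, with the transfer quantitatively absorbed by the decrease of $\Upsilon$. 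Making this uniform in $\nu$, and simultaneously controlling the singular (Cantor) component in the limit---where weak$^{*}$ convergence does not see the Lebesgue-null sets on which it is concentrated---is the delicate point, and is exactly what forces the two-sided formulation whose only surviving terms on such sets are the dissipation measures.
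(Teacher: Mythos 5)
Your overall architecture (work on the front-tracking/operator-splitting approximation, charge errors to $\abs{\xi^{\nu,\cont}}$, $\mu^{\nu,\sour}$ and to the decrease of the Glimm functional, then pass to the limit) matches the paper's, but there are two substantive problems. First, you have attached the mechanisms to the wrong sides of the estimate. For a uniformly convex flux the positive part of $D_{x}^{\cont}u$ is the rarefaction part, and it is that part which obeys the classical backward-in-time Ol\"einik spreading bound; the negative continuous part is compression, and no backward ``footprint'' argument can control it, since a compressive continuous wave traced backward occupies a \emph{larger} region, while traced forward it disappears into the jump set. Yet you present the backward spreading/disjoint-footprint argument as the proof of~\eqref{E:iOleinink+1dn} (the negative part, which you even call ``the classical Oleinik spreading bound'') and the forward focusing argument as the proof of~\eqref{E:iOleinink-1dn} (the positive part). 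As written, the argument you give for~\eqref{E:iOleinink+1dn} does not produce that estimate.

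Second, and more importantly, you explicitly defer the compressive estimate (``I expect the genuine obstacle to be the compressive direction\dots is the delicate point'') rather than prove it --- and that estimate is the entire new content of the lemma. The paper's resolution is concrete: one encloses (a finite union of intervals approximating) $B$ between generalized characteristics $a_{\nu},b_{\nu}$ and runs a dichotomy on $\frac{d}{dt}\Ll^{1}(J_{\nu}(r))$. Either the region contracts at rate at least $\frac14\abs{[\eta^{\nu,\cont}](J)}$ throughout the time window, in which case integration yields the $\Ll^{1}(B)/\abs{t-s}$ term; or at some intermediate time $\overline r$ the contraction rate is smaller, and since $\frac{d}{dt}\Ll^{1}(J_{\nu})$ equals $[D_{x}f'(u_{\nu})](J_{\nu})$ up to boundary errors of size $O(\varepsilon_{\nu})$, the continuous compressive mass inside $J_{\nu}(\overline r)$ is already small; the balance~\eqref{E:evcontfp} on the characteristic region $A_{\nu}$ (whose lateral jump fluxes satisfy $\Phi_{\nu}^{\jump}(A_{\nu})\le 0$ and whose continuous lateral fluxes are controlled by cancellations, Remark~\ref{R:sloppyEstFunct}) then transports this smallness back to the evaluation time at the price of $\abs{\xi^{\nu,\cont}}(A_{\nu})$ and $\norm{f''}_{\infty}\TV^{-}(\Upsilon(u_{\nu}))$. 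This region-balance dichotomy, rather than per-front tracing, is what makes the compressive bound work and what generalizes to systems; it is also what justifies the limit passage, since weak$^{*}$ convergence alone does not give the lower semicontinuity of positive and negative parts on Borel sets that you invoke --- one needs the separation of $\wm^{\nu,\jump}$ and $\wm^{\nu,\cont}$ from Theorem~\ref{T:fineConv}, the approximation of $B$ by finitely many closed intervals, and finally the uniform convexity $f'(z+h)-f'(z)\geq\mathfrak c h$ to convert the bound on $D_{x}f'(u)$ into one on $D_{x}u$.
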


{Such estimates are not sharp: they are redundant to better picture a strategy that works also for the case of systems.}
We observe that it is done, once arrived to~\eqref{E:iOleinink1dn}.
\begin{lemma}
Estimates~\eqref{E:iOleinink1dn} imply that $D^{\Cantor}_{x}u(t)$ can be present at most countably many times.
\end{lemma}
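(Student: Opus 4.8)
The plan is to read \eqref{E:iOleinink1dn} on Lebesgue-null Borel sets and to repackage the right-hand sides as finite measures on the time axis, whose atoms will constitute the exceptional set $S$. The starting observation is purely measure-theoretic: for each fixed $t$ the continuous measure splits as $D_{x}^{\cont}u(t)=D_{x}^{\ac}u(t)+D^{\Cantor}_{x}u(t)$ into two \emph{mutually singular} pieces (the first carried by a set of positive $\Ll^{1}$-measure, the second concentrated on a $\Ll^{1}$-null set). Mutual singularity makes the Hahn decomposition additive, so
\[
[D_{x}^{\cont}u(t)]^{\pm}=[D_{x}^{\ac}u(t)]^{\pm}+[D^{\Cantor}_{x}u(t)]^{\pm},
\]
with the first summand absolutely continuous and the second singular. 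Hence $D^{\Cantor}_{x}u(t)=0$ will follow as soon as we show that both $[D_{x}^{\cont}u(t)]^{-}$ and $[D_{x}^{\cont}u(t)]^{+}$ are absolutely continuous with respect to $\Ll^{1}$, i.e.\ vanish on every Borel set $B$ with $\Ll^{1}(B)=0$.

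Next I would introduce two finite positive Borel measures on $[0,T]$ obtained by projecting the right-hand sides of \eqref{E:iOleinink1dn} onto time through $\pi:(t,x)\mapsto t$:
\[
\zeta^{-}:=\pi_{\#}\abs{\xi^{\cont}}+\theta,\qquad
\zeta^{+}:=\norm{f''}_{\infty}\,\pi_{\#}\mu^{\sour}+\theta,
\]
where $\theta$ is the finite measure on time encoding the bounded, monotone decrease of the Glimm functional recorded by $\TV^{-}(\Upsilon(u_{\nu});\cdot)$ in the limit $\nu\to\infty$. Both $\zeta^{\pm}$ are finite: the projected source and transport measures are finite by Lemma~\ref{L:estSource1d} and Remark~\ref{R:evolfpr}, and $\theta([0,T])$ is bounded because $\Upsilon$ is nonincreasing along the approximation. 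A finite positive measure on $[0,T]$ has at most countably many atoms, so I set $S$ to be the (countable) union of the atomic sets of $\zeta^{-}$ and $\zeta^{+}$.

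The core step is the shrinking-strip argument. Fix $t\notin S$ and a Borel set $B$ with $\Ll^{1}(B)=0$. In \eqref{E:iOleinink+1dn} the term $\Ll^{1}(B)/(t-s)$ vanishes, so for every $s<t$
\[
[D_{x}^{\cont}u(t)]^{-}(B)\le C\,\zeta^{-}([s,t]).
\]
The left-hand side is independent of $s$; letting $s\uparrow t$, continuity from above gives $\zeta^{-}([s,t])\to\zeta^{-}(\{t\})=0$, since $t$ is not an atom of $\zeta^{-}$. Thus $[D_{x}^{\cont}u(t)]^{-}(B)=0$ for all $\Ll^{1}$-null $B$, whence $[D_{x}^{\cont}u(t)]^{-}\ll\Ll^{1}$ and its singular part $[D^{\Cantor}_{x}u(t)]^{-}$ vanishes. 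The symmetric argument on \eqref{E:iOleinink-1dn}, shrinking the strip $[t,s]$ with $s\downarrow t$, yields $[D^{\Cantor}_{x}u(t)]^{+}=0$. Therefore $D^{\Cantor}_{x}u(t)=[D^{\Cantor}_{x}u(t)]^{+}-[D^{\Cantor}_{x}u(t)]^{-}=0$ for every $t\notin S$, which is the assertion.

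The main obstacle I anticipate is bookkeeping rather than conceptual. One must check that the projected quantities $\zeta^{\pm}$ are genuine finite measures on $[0,T]$ with $\zeta^{\pm}([s,t])\to\zeta^{\pm}(\{t\})$ as the strip collapses; in particular the $\TV^{-}(\Upsilon(u_{\nu}))$ contribution has to be passed, uniformly in $\nu$, to a well-defined limiting time measure rather than left depending on the approximation index. One should also verify, using the right-continuous-in-time representative fixed after Definition~\ref{D:cantorPart1dn}, that the slice $D_{x}^{\cont}u(t)$ and its Hahn decomposition are compatible with the estimate at each fixed $t$, so that the atom count of $\zeta^{\pm}$ really controls all bad times. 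These are the standard technical points already handled in \cite{AmbrosioDeLellisNotes,BCSBV}; the one genuinely new feature is that the source forces the two one-sided estimates to rely on different control measures ($\abs{\xi^{\cont}}$ versus $\mu^{\sour}$), but since both are finite the countability conclusion is unaffected.
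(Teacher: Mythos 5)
Your argument is correct and follows essentially the same route as the paper's proof: both collapse the strip $[s,t]$ onto $\{t\}$, observe that the right-hand sides of~\eqref{E:iOleinink1dn} define a finite (hence at-most-countably-atomic) measure on the time axis, and conclude that $[D_{x}^{\cont}u(t)]^{\pm}$ must vanish on Lebesgue-null sets for every non-atom $t$, killing the Cantor part. The only difference is cosmetic: the paper bundles the three error terms into a single time measure $\mu$ rather than your two measures $\zeta^{\pm}$.
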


\begin{proof}
{Define the nonnegative Radon measure on the real line with the following values on intervals:
\[
\mu([s,t])=\abs{\xi^{\cont}}([s,t]\times \R) +\norm{f''}_{\infty}\mu^{\sour}([s,t]\times \R)+\TV^{-}\left( \Upsilon(u_{\nu});(s,t]\right)\,.
\]}%
For a Borel set $K$ with $\Ll^{1}(K)=0$, taking the limit as $s\to t$, from~\eqref{E:iOleinink1dn} we get
\[
0\leq[D_{x}^{\cont}u(t)]^+ (K)\leq \mu(\{t\})
\qquad\text{and}
\qquad
0\leq[D_{x}^{\cont}u(t)]^- (K) \leq \mu(\{t\})\,.
\]
In particular, $\abs{D_{x}u(t)} (K)>0$ is allowed only if $\mu$ has an atom: this can happen at most at countably many instants.
Since the Cantor part of $D_{x}u(t)$ is the part of the continuous measure $D^{\cont}_{x}u(t)$ which is not absolutely continuous, thus it is concentrated on a $\sigma$-compact set $K$ with $\Ll^{1}(K)=0$, we conclude that $D^{\Cantor}_{x}u(t)$ can be present at most countably many times, as stated.
\end{proof}

\subsection{Balances on characteristic regions.}
Before explaining where Ol\"einink type estimates~\eqref{E:iOleinink1dn} come from, we need an auxiliary tool: approximate balances on the flow of the union of finitely many intervals.
Let's begin with simpler approximate balances on the $\nu$-flow of a closed interval $J_{}(t)=[a(t),b(t)]$.

{We recall the definition of generalized characteristics, introduced by Dafermos~\cite{Dafermos}.}

\begin{definition}
A generalized characteristic curve of $u_{\nu}$ is a Lipschitz continuous function $y_{\nu}:[0,T]\to\R$ whose velocity $\dot y_{\nu}(t)$ is between $f'(u_{\nu}(t,y_{\nu}(t-))$ and $f'(u_{\nu}(t,y_{\nu}(t+))$ at $\Ll^{1}$-a.e.~time $t\in[0,T]$.
\end{definition}
If $a_{\nu}(t)\leq b_{\nu}(t)$ are generalized characteristics of $u_{\nu}$, setting $J_{\nu}(t)=[a_{\nu}(t)\,, b_{\nu}(t)]$ for $0\leq s\leq t\leq T$ we compute
\bas
\left[ \wm_{}^{\nu,\cont}(t)\right](J_{\nu}(t))-
\left[ \wm_{}^{\nu,\cont}(s)\right](J_{\nu}(s))
=\mu^{\nu,\cont} \left(A_{\nu} \right)+\Phi_{\nu}^{\cont}(A_{\nu})
\eas
where $A_{\nu}=\{(\tau,\xi) \ : \ s< \tau \leq t\,, \ \xi\in J_{\nu}(t)\}$ is the region bounded by $a_{\nu}(t)$, $b_{\nu}(t)$ and $\Phi_{\nu}^{\cont}(A_{\nu})$ is the flux of continuous waves through the boundary of $A_{\nu}$.
More precisely, $\Phi_{\nu}^{\cont}$ is, in principle, the sum of sizes of waves that are not $\beta_{\nu}$-approximate discontinuities of $u_{\nu}$, see Definition~\ref{D:jumpApprox}, and that enter the region $A_{\nu}$ in the time interval $(s,t]$, minus the sum of sizes of waves that are not $\beta_{\nu}$-approximate discontinuities of $u_{\nu}$ and that exit the region $A_{\nu}$ in the time interval $[s,t)$.

Since neither rarefaction waves nor shock waves can leave the region $A_{\nu}$ from lateral boundaries, due to the fact that it is bounded by generalized characteristics and the flux is convex, with the irrelevant exception of rarefactions arising at update points along the boundary, then the only contribution to the lateral flux is the one of rarefactions and small jumps entering the region: except at most one time per boundary, when $a_{\nu}(t)$ or $b_{\nu}(t)$ might bump into a jump and later they follow this jump, this entering of waves is possible only if $a_{\nu}(t)$ or $b_{\nu}(t)$ are small jumps that interact with the wave that is entering.
Due to this analysis, and due to interaction estimates recalled in Remark~\eqref{R:sloppyEstFunct}, $\Phi_{\nu}^{\cont}(A_{\nu})$ is estimated from above by the decrease of $\Upsilon$ due to cancellations located on the lateral boundary of $A_{\nu}$.

In the same way as for the continuous part, we compute
\bas
\left[ \wm_{}^{\nu,\jump}(t)\right](J_{\nu}(t))-
\left[ \wm_{}^{\nu,\jump}(s)\right](J_{\nu}(s))
=\mu^{\nu,\jump} \left(A_{\nu} \right)+\Phi_{\nu}^{\jump}(A_{\nu})
\eas
where $\Phi_{\nu}^{\jump}$ is, in principle, 
\begin{itemize}
\item the sum of sizes of waves that are $\beta_{\nu}$-approximate discontinuities of $u_{\nu}$, see Definition~\ref{D:jumpApprox}, and that enter the region $A_{\nu}$ in the time interval $(s,t]$, minus 
\item the sum of sizes of waves that are $\beta_{\nu}$-approximate discontinuities of $u_{\nu}$ and that exit the region $A_{\nu}$ in the time interval $[s,t)$.
\end{itemize}
Since no $\beta_{\nu}$-approximate discontinuity can leave the region, and since $\beta_{\nu}$-approximate discontinuities entering the region bring a negative contribution, then $\Phi_{\nu}^{\jump}(A_{\nu})\leq0$.

Extending the estimate when $J_{\nu}$ is the evolution of the union of countably many closed intervals one obtains
\ba
&\left[ \wm_{}^{\nu}(t)\right](J_{\nu}(t))-
\left[ \wm_{}^{\nu}(s)\right](J_{\nu}(s))
\leq \abs{\mu^{\nu}} \left(A_{\nu} \right)+\TV^{-}\left( \Upsilon(u_{\nu});(s,t]\right)\,,
\\
\label{E:evcont}
&\left[ \wm_{}^{\nu,\cont}(t)\right](J_{\nu}(t))-
\left[ \wm_{}^{\nu,\cont}(s)\right](J_{\nu}(s))
\leq \abs{\mu^{\nu,\cont}} \left(A_{\nu} \right)+\TV^{-}\left( \Upsilon(u_{\nu});(s,t]\right)\,,
\\
\label{E:evjump}
&\left[ \wm_{}^{\nu,\jump}(t)\right](J_{\nu}(t))-
\left[ \wm_{}^{\nu,\jump}(s)\right](J_{\nu}(s))
\leq \abs{\mu^{\nu,\jump}} \left(A_{\nu} \right) \,.
\ea

Such balances are the final ingredient to get Ol\"einink type estimates~\eqref{E:iOleinink1dn}.
We stress that looking at the evolution of $D_{x}f'(u_{\nu})$ instead of $D_{x}u_{\nu}$, as explained in Remark~\ref{R:evolfpr}, we would equally find
\ba
\label{E:evcontfp}
&\left[ \eta^{\nu,\cont}(t)\right](J_{\nu}(t))-
\left[ \eta^{\nu,\cont}(s)\right](J_{\nu}(s))
\leq \abs{\xi^{\nu,\cont}} \left(A_{\nu} \right)+\norm{f''}_{\infty}\TV^{-}\left( \Upsilon(u_{\nu});(s,t]\right)\,,
\\
\label{E:evjumpfp}
&\left[ \eta^{\nu,\jump}(t)\right](J_{\nu}(t))-
\left[ \eta^{\nu,\jump}(s)\right](J_{\nu}(s))
\leq \abs{\xi^{\nu,\jump}} \left(A_{\nu} \right) \,,
\\
&\label{E:ultimaf}
\left[ \eta^{\nu}(t)\right](J_{\nu}(t))-
\left[ \eta^{\nu}(s)\right](J_{\nu}(s))
\leq \abs{\xi^{\nu}} \left(A_{\nu} \right)+\norm{f''}_{\infty}\TV^{-}\left( \Upsilon(u_{\nu});(s,t]\right) \,.
\ea

\subsection{Sketch of Ol\"einink type two-sided estimates.}
Let's see the origin of the Ol\"einink type estimates~\eqref{E:iOleinink1dn}.

\subsubsection{Lower Ol\"einink type estimate~\eqref{E:iOleinink+1dn}}
First focus on the single interval $B=J=J_{\nu}(0)$.
Simplify notations denoting $s=0$, $t=t^{*}$ and consider generalized characteristics $a_{\nu}$, $b_{\nu}$ as in the previous paragraphs.
{Since we want to estimate the negative part of the measure, consider the initial condition $\left[ \eta^{\nu,\cont}(0)\right](J)<0$.}
Case 1: In case
\[-\ddt \Ll^{1}(J_{\nu}(r))=\dot a_{\nu}(r)-\dot b_{\nu}(r) > -\frac{1}4\left[ \eta_{}^{\nu,\cont}(0)\right](J)\geq0
\qquad\text{for a.e.~$0<r<t^{*}$ then}
\]
just by integrating we get the desired inequality
\begin{equation}\label{E:grggrggr}
\Ll^{1}(J_{\nu}(0)) - \Ll^{1}(J_{\nu}(t^{*})) >- \frac{  t^{*}}4\left[ \eta^{\nu,\cont}(0)\right](J)
\qquad\Longrightarrow\qquad
-\left[ \eta^{\nu,\cont}(0)\right](J_{ })<\frac4{  t^{*}}\Ll^{1}(J ) 
\,.
\end{equation}
Case 2: 
Suppose instead there is some time $\overline r\in (0,t^{*})$ such that
\begin{equation}\label{E:rggreggger}
-\ddt \Ll^{1}(J(\overline r))=\dot a_{\nu}(\overline r)-\dot b_{\nu}(\overline r) < -\frac{1}4\left[ \eta^{\nu,\cont}(0)\right](J)\,,
\qquad -\left[ \eta_{}^{\nu,\cont}(0)\right](J)\geq 0.
\end{equation}
Define the boundary error $ \mathfrak d(t)=\dot a_{\nu}(\overline r)-\dot b_{\nu}(\overline r)-f'(u_{\nu}(\overline r,a_{\nu}(\overline r)-))+f'(u_{\nu}(\overline r,b_{\nu}(\overline r)+))$. In particular, {by how the discretization of rarefactions is defined and} by the Rankine-Hugoniot condition, $\mathfrak d(t)=\hat \rho_{1}\left[D_{x}f'(u_{\nu}(\overline r))\right](a_{\nu}(\overline r))+\hat \rho_{2}\left[D_{x}f'(u_{\nu}(\overline r))\right](b_{\nu}(\overline r))$ with $\hat \rho_{1},\hat \rho_{2}\in[0,1]$ so that
\begin{align*}
-\ddt \Ll^{1}(J(\overline r))
&=-(D_{x}f'(u_{\nu}(\overline r)))(J(\overline r))+\mathfrak d(\overline r)
\\
&=-\left[D_{x}f'(u_{\nu}(\overline r))\right]((a_{\nu}(\overline r),b_{\nu}(\overline r)))+(\hat \rho_{1}-1)\left[D_{x}f'(u_{\nu}(\overline r))\right](a_{\nu}(\overline r))+(\hat \rho_{2}-1)\left[D_{x}f'(u_{\nu}(\overline r))\right](b_{\nu}(\overline r))
\\
&\geq-\left[\eta_{}^{\nu,\cont}(\overline r)\right](J_{\nu}(\overline r))-2\varepsilon_{\nu}
\end{align*}
where in the last inequality we possibly neglected the positive contribution of $\beta$-approximate discontinuities and we estimate possible rarefactions at the boundary by $\varepsilon_{\nu}$. 
Jointly with~\eqref{E:rggreggger} then
\begin{align*}
- \left[ \eta^{\nu,\cont}(\overline r)\right](J_{\nu}(\overline r))
&\leq -\frac{1 }4\left[ \eta^{\nu,\cont}(0)\right](J)+  8\varepsilon_{\nu}\,.
\end{align*}
A similar argument works if $J_{\nu}(t)$ is the evolution of $k$ intervals, with an error $8k\varepsilon_{\nu}$
Applying~\eqref{E:evcontfp} we arrive to
\begin{align*}
- \left[ \eta^{\nu,\cont}(0)\right](J )-\abs{\xi^{\nu,\cont}} \left(A_{\nu} \right)-{ \norm{f'' }_{\infty}}\TV^{-}\left( \Upsilon(u_{\nu});(s,t]\right)
&\leq- \frac{1}4\left[ \eta^{\nu,\cont}(0)\right](J)+  8k\varepsilon_{\nu}\,.
\end{align*}
Rearranging terms, we get
\[
 - \left[ \eta^{\nu,\cont}(0)\right](J )\leq 2\abs{\xi^{\nu,\cont}} \left(A_{\nu} \right)+2{ \norm{f'' }_{\infty}}\TV^{-}\left( \Upsilon(u_{\nu});(s,t]\right)+16k\varepsilon_{\nu}\,.
\]
We are close to estimate~\eqref{E:iOleinink+1dn} we were looking for, as $A_{\nu}\subset [0,t^{*}]\times\R$ and jointly with~\eqref{E:grggrggr}.
What remains, roughly, is first a careful limiting procedure {paired with} the approximation of $B$ with the union $J$ of finitely many intervals.
After that, which also allows to localize the limit estimate where $[D_{x}u^{\cont}]^{+}$ and $[D_{x}u^{\cont}]^{-}$ are concentrated, the relation $f'(z+h)-f'(x)\geq \kappa h$ allows to get the estimate for the derivative of $u$ in place of the one for the derivative of $f'(u)$, that was obtained with this procedure.

\subsubsection{Upper Ol\"einink type estimate~\eqref{E:iOleinink-1dn}}
We proceed similarly as before but in the interval from $s=-t^{*}$ to $t=0$ and considering $\left[ D_{x}f'(u_{\nu}(0))\right](J)=\left[ \eta_{}^{\nu,\cont}(0)\right](J)>0$. 
Of course, for simplifying notations we translated the time-line so that the original Cauchy problem had initial datum at a negative initial time.
{Case 1: Suppose}
\[\ddt \Ll^{1}(J_{\nu}(r))=\dot b_{\nu}(r)-\dot a_{\nu}(r) > \frac{1}4\left[ \eta_{}^{\nu}(0)\right](J)\geq0
\qquad\text{for a.e.~$-t^{*}<r<0$, then}
\]
integrating the inequality and rearranging non-negligible terms we get $\left[ \eta_{}^{\nu}(0)\right](J_{ })<\frac4{  t^{*}}\Ll^{1}(J ) 
$. {Case 2: Suppose}
\begin{equation*}
\ddt \Ll^{1}(J(\overline r))=\dot b_{\nu}(\overline r)-\dot a_{\nu}(\overline r) <\frac{1}4\left[ \eta_{}^{\nu}(0)\right](J)
\end{equation*}
at some $-t^{*}<\overline r<0$, similarly as before, and recalling that shocks are negative, we arrive to
\begin{align*}
\ddt \Ll^{1}(J(\overline r))
&=\left[ \eta_{}^{\nu}(\overline r)\right](J(\overline r))-\mathfrak d(\overline r)
\geq \left[ \eta_{}^{\nu}(\overline r)\right]([a_{\nu}(\overline r),b_{\nu}(\overline r)])-2\varepsilon_{\nu}\,.
\end{align*}
Apply now the balance~\eqref{E:ultimaf}, and that $ \abs{\xi^{\nu}} \left(A_{\nu} \right)\leq \norm{f''}_{\infty}\mu^{\sour}(A_{\nu})$, as at interactions--cancellations $\xi^{\nu}$ vanishes.

\[
\left[ \eta_{}^{\nu}(0)\right]( J)-\norm{f''}_{\infty}\mu^{\sour}(A_{\nu})-\norm{f''}_{\infty}\TV^{-}\left( \Upsilon(u_{\nu});(s,t]\right)-2\varepsilon_{\nu}< \frac{1}4\left[ \eta_{}^{\nu}(0)\right](J).
\]
The same can be obtained for union of $k$ intervals, so that
\[
\left[D_{x}f'(u_{\nu}(0))\right]( J)\leq\frac4{  t^{*}}\Ll^{1}(J ) +2\norm{f''}_{\infty}\left(\mu^{\sour}(A_{\nu})+\TV^{-}\left( \Upsilon(u_{\nu});(s,t]\right)\right)-4k\varepsilon_{\nu}\,.
\]
We conclude by a limiting procedure and by approximation of Borel sets, since $\left[D_{x}f'(u_{\nu}(0))\right]^{+}\geq \mathfrak c\left[D_{x}u_{\nu}(0)\right]^{+}$.

 \subsection{Proof of the Theorem~\ref{T:corolloary}}
 
Point~\eqref{item:1SBVlike} is based on a separate analysis of various regions of the domain $[0,T]\times\R$.
One decomposes $[0,T]\times R$ into 
\begin{itemize}
\item a jump set $J$ of points $(t,x)$ where $u(t,x+)\neq u(t,x-)$, 
\item into $F=\{(t,x)\notin J\ : \ f''(u(t,x))=0\}$, and 
\item into $P=\{(t,x)\notin J\ : \ f''(u(t,x))\neq0\}$.
\end{itemize}
Since $J_{t}=J\cap\{t\}\times\R$ is at most countable, of course $[D_{x}^{\Cantor}f'(u(t))](J_{t})=0$.
Moreover, by Vol'pert chain rule $D_{x}f'(u)=f''(u)D_{x}u$, therefore the measure $D_{x}f'(u)$ vanishes on $F$ because it is multiplied by $0$.
The idea to treat $P$ is that, by the tame oscillation condition, one can construct a countable covering of $P$ with open triangles $T_{k}$ where $f''(u(T_{k}))$ has a positive lower bound, or a negative upper bound, plus a remaining set $S_{0}\times \R$ having at most countable projection on the $t$-axis. On each $T_{k}$, Theorem~\ref{T:main} applies{, so that} $D_{x}u(t)\restriction_{T_{k}}$ has a Cantor part at most for $t\in S_{k}$ with $S_{k}$ at most countable: for $t\notin S=\cup_{k=0}^{\infty}S_{k}$ therefore $u(t)$ is a special function of bounded variation.
We do not work out all details of the covering, which has the same difficulty of the case of systems, see~\cite[\S~2]{BYTrieste},~\cite[\S~2.2]{ACM2}.

Point~\eqref{item:2SBVlike} follows if one shows that there is no Cantor part of $D_{x}u$ on the set $F$ introduced above. 
Indeed, consider any triangle $T_{k}$ of the covering of $P$ just mentioned: the proof of Point~\eqref{item:1SBVlike} shows that $x\mapsto u(t,x)$ is a special function of bounded variation, on that triangle, if $t$ does not belong to the at most countable set $ S$.
Moreover, $[D_{x}^{\Cantor}u(t)](J_{t})=0$ because $J_{t}$ is at most countable.
The remaining set to analyze of the partition is $F$.

The fact that the Cantor part of $D_{x}u$ vanishes on $F$ is a consequence of the assumption that $\Ll^{1}(u(F))=0$. 
In order to understand this, consider any compact set $K\subseteq F\cap{\{t=\overline t\}}$ of continuity points of $x\mapsto u(\overline t,x)$ with $\Ll^1(K)=0$.
We identify $K$ with a subset of the real line and we set $v(x)=u(\overline t,x)$.
We now prove that $\abs{D_x v}(K)=0$.
\\
Given $\varepsilon>0$, approximate $K$ from the exterior with finitely many closed intervals $I_1,\dots,I_k$ so that $K\subset \cup_{i=1}^k I_k$ and $\Ll^1(\cup_{i=1}^k I_i)<\varepsilon^2$; since $v$ is continuous at points of $K$ and $\Ll^1({v(K)})=0$ by assumption, being contained in the negligible set $u(F)$, we can also ask that {$\sum_{i=1}^k\Ll^1(  v(I_i))<\varepsilon^2$}.
Define the sequence of functions
\[
v_\varepsilon(x)=\left[D_x v\right]\left((-\infty,x]\setminus \cup_{i=1}^k I_k\right)\,.
\]
Notice that by construction, by the choice of the intervals exploiting $\Ll^1(v(K))=0$,
\[
\abs{v(x)-v_\varepsilon(x)}\leq  {\sum_{i=1}^k\Ll^1(  v(I_i))<\varepsilon^2}
\]
so that if $\varepsilon\downarrow0$ the functions converges in $L_{\loc}^1$ to $v$.
By the lower semicontinuity of the variation in the $L^1_{\loc}$-convergence, see~\cite[\S~3.1, Remark~3.5]{AFP}, we have that \(\abs{D_{x}v}(\R)\leq\liminf_\varepsilon \abs{D_{x}v_{\varepsilon}}(\R)\).
Since \[ \abs{D_{x}v_{}}(\R)= \abs{D_{x}v_{\varepsilon}}(\R)+\abs{D_x v}\left(\cup_{i=1}^k v(I_k)\right)\geq \abs{D_{x}v_{\varepsilon}}(\R)+\abs{D_x v}(K)\] by the very definition of $v_{\varepsilon}$, then concatenating the two inequalities we get that necessarily $\abs{D_x v}(K)=0$.

\section*{Acknowledgments}
All authors are members of the Gruppo Nazionale per l'Analisi Matematica, la Probabilit\`a e le loro Applicazioni (GNAMPA) of the Istituto Nazionale di Alta Matematica (INdAM) and they are supported by the PRIN national project ``Hyperbolic Systems of Conservation Laws and Fluid Dynamics: Analysis and Applications''. F.A. and L.C. are partially supported by the PRIN
2020 ”Nonlinear evolution PDEs, fluid dynamics and transport equations: theoretical foundations and applications” and PRIN PNRR P2022XJ9SX of the European Union – Next Generation EU.

\bibliographystyle{plain}

\end{document}